\newtheorem{theorem}{Theorem}[section]
\newtheorem{proposition}[theorem]{Proposition}
\newtheorem{lemma}[theorem]{Lemma}
\newtheorem{corollary}[theorem]{Corollary}
\theoremstyle{definition}
\newtheorem{example}[theorem]{Example}
\newtheorem{definition}[theorem]{Definition}
\newtheorem{remark} [theorem] {Remark}
\newtheorem{problem} [theorem] {Problem}
\begin{document}
\title{ Spectrum of Weighted Composition Operators \\
Part IX \\
The spectrum and essential spectra of some weighted composition operators
on uniform algebras.}

\author{Arkady Kitover}

\address{Community College of Philadelphia, 1700 Spring Garden St., Philadelphia, PA, USA}

\email{akitover@ccp.edu}

\author{Mehmet Orhon}

\address{University of New Hampshire, 105 Main Street
Durham, NH 03824}

\email{mo@unh.edu}

\subjclass[2010]{Primary 47B33; Secondary 47B48, 46B60}

\date{\today}

\keywords{Weighted composition operators, spectrum, Fredholm spectrum, essential spectra}
\begin{abstract}
  We obtain some results about the spectrum and the upper semi-Fredholm spectrum of weighted composition operators on uniform algebras, assuming that the corresponding map maps the Shilov boundary onto itself. In particular, it follows from our results that in the case of analytic uniform algebras the spectrum is a connected rotation invariant subset of the complex plane, and that the upper semi-Fredholm spectrum is rotation invariant as well.
\end{abstract}

\maketitle

\markboth{Arkady Kitover and Mehmet Orhon}{Spectrum of weighted composition operators. IX}

\section{Introduction and preliminaries}

This paper represents a continuation of the investigation of the spectrum and  essential spectra of weighted composition operators acting on uniform algebras started in~\cite{Ki} and continued in~\cite{KO}. In~\cite{Ki} the first named author obtained a description of the spectrum and approximate point spectrum of weighted automorphisms of unital uniform algebras, while in~\cite{KO} we obtained some additional information about essential spectra of weighted automorphisms of unital uniform algebras for some special types of automorphisms (e.g. automorphisms of the disk-algebra induced by elliptic Möbius transformations).

The goal of this paper is to obtain some information about the spectra of weighted isometric endomorphisms of uniform algebras. Our main results are located in Section 2. The first of them (Theorem~\ref{t1}) states that the upper semi-Fredholm spectrum of such a weighted endomorphism coincides with its approximative point spectrum, the second one (Theorem~\ref{t2}) provides a criterion for the presence (or absence) of circular holes in the spectrum, while Theorem\ref{t3} and Proposition~\ref{p2}  provide some information about the semi-Fredholm spectra in some special cases. 

As a corollary of our main results (Corollary~\ref{c3}) we show that the spectrum of a weighted isometric endomorphism of an analytic uniform algebra (see Definition~\ref{d1}) is a rotation invariant and connected subset of the complex plane.

In section 3 we apply our main results to obtain a description of essential spectra of weighted automorphisms of the disc-algebra.
In section 4 we consider a few examples of applications of our main results.

\bigskip

\noindent Throughout the paper the following notations are used.

\noindent $A$ is a unital uniform algebra.

\noindent $\mathfrak{M}_A$ is the space of maximal ideals of $A$.

\noindent $\partial A$ is the Shilov boundary of $A$.

\noindent $C(K)$ is the space of all complex-valued continuous functions on a Hausdorff compact space $K$ endowed with the standard norm.

\noindent $\mathds{N}$ is the set of all positive integers.

\noindent $\mathds{C}$ is the field of all complex numbers.

\noindent All linear spaces are considered over the field $\mathds{C}$.

\noindent Let $K$ be a compact Hausdorff space and $\varphi$ be a continuous surjection of $K$ onto itself. Then, $\varphi^0(k) = k, k \in K$, and $\varphi^n = \varphi^{n-1} \circ \varphi, n \in \mathds{N}$.

\noindent If $E \subseteq K$ and $n \in \mathds{N}$ then $\varphi^{(-n)}(E) = \{k \in K : \varphi^n(k) \in E\}$ is the full $n^{th}$ $\varphi$-preimage of $E$.

\noindent Let $K$ be a Hausdorff compact space, $\varphi$ be a continuous map of $K$ into itself, and $w \in C(K)$. Then $w_1 = w$ and $w_n = w_{n-1}(w \circ \varphi^{n-1}), n = 2,3, \ldots$.

\noindent Let $X$ be a Banach space and $T : X \rightarrow X$ be a continuous linear operator. Then $\sigma(T,X)$ is the spectrum of $T$. 

\noindent $\sigma_{a.p.}(T,X)$ is the approximate point spectrum of $T$, i.e.
\begin{equation*}
   \sigma_{a.p.}(T,X) = \{\lambda \in \mathds{C} : \exists x_n \in X, \|x_n\|=1, Tx_n - \lambda x_n \mathop \rightarrow_{n \rightarrow \infty} 0\}.
\end{equation*}
According to this definition the point spectrum of $T$ is a subset of $\sigma_{a.p.}(T,X)$.

\noindent $\sigma_r(T,X) = \sigma(T,X) \setminus \sigma_{a.p.}(T,X)$ is the residual spectrum of $T$.

\noindent Recall that a continuous linear operator $T$ on a Banach space $X$
 is called upper semi-Fredholm (respectively, lower semi-Fredholm) if its image $R(T)$ is closed in  $X$ and $\dim \ker T < \infty$ (respectively, if $codim R(T) < \infty$).  

\noindent The operator $T$ is called semi-Fredholm if it is either upper semi-Fredholm or lower semi-Fredholm, and it is called Fredholm if it is both upper semi-Fredholm and lower semi-Fredholm.

\noindent The operator $T$ is called Weil operator if $\dim \ker T = codim R(T) < \infty$.

\bigskip

\noindent $\sigma_{sf}(T)$ is the semi-Fredholm spectrum of $T$. $\sigma_{sf}(T) = \{\lambda \in \mathds{C}: \lambda I - T$ is not semi-Fredholm$\}$.

\noindent $\sigma_{usf}(T,X)$ is the upper semi-Fredholm spectrum of $T$, i.e. $\sigma_{usf}(T) = \{\lambda \in \mathds{C}: \lambda I - T$ is not an upper semi-Fredholm operator$\}$. It is well known (see e.g.~\cite{EE}) that
\begin{equation*}
  \begin{split}
  & \sigma_{usf}(T,X) = \{\lambda \in \mathds{C} : \exists x_n \in X, \|x_n\|=1, Tx_n - \lambda x_n \mathop \rightarrow_{n \rightarrow \infty} 0 ,\\
  & \text{and the sequence} \; x_n \; \text{is singular, i.e. it does not contain} \\
   & \text{a norm convergent subsequence}\}.
  \end{split}
\end{equation*}

\noindent $\sigma_{lsf}(T,X)$ is the lower semi-Fredholm spectrum of $T$, i.e. $\sigma_{lsf}(T) = \{\lambda \in \mathds{C}: \lambda I - T$ is not a lower semi-Fredholm operator$\}$. 
\begin{equation*}
  \begin{split}
  & \sigma_{lsf}(T,X) = \{\lambda \in \mathds{C} : \exists x_n^\prime \in X^\prime, \|x_n^\prime\|=1, T^\prime x_n^\prime - \lambda x_n^\prime \mathop \rightarrow_{n \rightarrow \infty} 0 ,\\
  & \text{and the sequence} \; x_n^\prime \; \text{is singular}.
    \end{split}
\end{equation*}

\noindent $\sigma_f(T)$ is the Fredholm spectrum of $T$, i.e. $\sigma_f(T) = \{\lambda \in \mathds{C}: \lambda I - T$ is not a Fredholm operator$\}$.  

\noindent $\sigma_w(T)$ is the Weil spectrum of $T$, i.e. $\sigma_f(T) = \{\lambda \in \mathds{C}: \lambda I - T$ is not a Weil operator$\}$. 

\noindent It is well known (see~\cite{EE}) that
\begin{itemize}
  \item $\sigma_{sf}(T) \subseteq \sigma_{usf}(T) \subseteq \sigma_f(T) \subseteq \sigma_w(T)$,
  \item $\sigma_{usf}(T) = \sigma_{lsf}(T^\prime)$, $\sigma_{lsf}(T) = \sigma_{usf}(T^\prime)$.
\end{itemize}

\noindent Let $A$ be a uniform algebra and $U$ be an isometric endomorphism of $A$. Let $\varphi : \mathfrak{M}_A \rightarrow \mathfrak{M}_A$ be the corresponding continuous map of the space of maximal ideals of $A$ into itself. Then, $\varphi(\partial A) = \partial A$. We agree to denote the endomorphism $U$ as $T_\varphi$.

Recall that a point $t \in \partial A$ is called a $p$-point if $\{t\} = \bigcap \limits_\alpha E_\alpha$, where the sets $E_\alpha \subset \partial A$ are peak sets, i.e. there are $f_{\alpha} \in A$ such that $f_\alpha \equiv 1$ on $E_\alpha$ and $|f(s)| < 1, s \in \partial A \setminus E_\alpha$. The set of $p$-points is dense in $\partial A$ (see e.g.~\cite[p.12]{Gr}). The following lemma is probably known.

\begin{lemma} \label{l1}
  Let $A$ be a unital uniform algebra and $t \in \partial A$ is a $p$-point. Then the characteristic function $\chi_{\{t\}}$ of the singleton $\{t\}$ can be identified with an element of the second dual $A^{\prime \prime}$.
\end{lemma}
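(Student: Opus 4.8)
The plan is to realize $A''$ concretely as a subspace of $C(\partial A)''$ and to reduce the statement to the assertion that no measure annihilating $A$ charges the point $t$. Regard $A$ as a uniformly closed subalgebra of $C(\partial A)$ (the restriction map $A\to C(\partial A)$ is isometric), write $A^\perp=\{\mu\in C(\partial A)':\int f\,d\mu=0\ \text{for all}\ f\in A\}$, and recall the canonical identifications $A'=C(\partial A)'/A^\perp$ and $A''=(A^\perp)^\perp\subseteq C(\partial A)''$. Since any bounded Borel function $h$ on $\partial A$ defines a functional $F_h\in C(\partial A)''$ by $F_h(\mu)=\int h\,d\mu$ (agreeing with the canonical image of $h$ when $h\in C(\partial A)$), it suffices to show that $F:=F_{\chi_{\{t\}}}$, namely $\mu\mapsto\mu(\{t\})$, lies in $(A^\perp)^\perp$; equivalently, that $\mu(\{t\})=0$ for every $\mu\in A^\perp$.

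First I would record that annihilating measures restrict to peak sets: if $E\subseteq\partial A$ is a peak set with peak function $f\in A$ and $\mu\in A^\perp$, then for each $g\in A$ the functions $f^ng$ belong to $A$, are bounded in modulus by $\|g\|_\infty$, and converge pointwise on $\partial A$ to $\chi_E g$; dominated convergence with respect to $|\mu|$ gives $\int_E g\,d\mu=\lim_n\int f^ng\,d\mu=0$, so the restriction $\chi_E\mu$ again belongs to $A^\perp$.

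Next I would pass from peak sets to the $p$-point $t$. Let $\mathcal E$ be the family of all peak sets containing $t$. If $E_1,E_2\in\mathcal E$ have peak functions $f_1,f_2$ then $E_1\cap E_2\in\mathcal E$ with peak function $(f_1+f_2)/2$, so $\mathcal E$ is directed downward by inclusion, and since $t$ is a $p$-point $\bigcap\mathcal E=\{t\}$. Fix $\mu\in A^\perp$. Outer regularity of the finite positive measure $|\mu|$ together with compactness shows $\inf_{E\in\mathcal E}|\mu|(E)=|\mu|(\{t\})$: given an open $U\supseteq\{t\}$, the downward directed family of compact sets $\{E\setminus U:E\in\mathcal E\}$ has empty intersection, hence one of its members is empty, i.e. $E\subseteq U$ for some $E\in\mathcal E$. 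Therefore $\|\chi_E\mu-\mu(\{t\})\delta_t\|=|\mu|(E)-|\mu|(\{t\})\to0$ along $\mathcal E$; as every $\chi_E\mu$ lies in the norm-closed subspace $A^\perp$, so does $\mu(\{t\})\delta_t$, and pairing with the constant function $1\in A$ forces $\mu(\{t\})=0$.

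The routine part of this is the functional-analytic bookkeeping (identifying $A''$ with $(A^\perp)^\perp$ inside $C(\partial A)''$ and embedding bounded Borel functions into $C(\partial A)''$). The one genuinely delicate step is the last one — going from finite intersections of peak sets to the possibly uncountable intersection that defines a $p$-point — and it is handled precisely by the directedness of $\mathcal E$ under intersection, the compactness of $\partial A$, and the outer regularity of $|\mu|$.
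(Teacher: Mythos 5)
Your proof is correct and follows essentially the same route as the paper: reduce to showing $\mu(\{t\})=0$ for every $\mu\in A^{\perp}$, handle peak sets by taking powers of the peak function and applying dominated convergence, then pass from peak sets to their intersection $\{t\}$. The only difference is that where the paper invokes order continuity of $\mu$ on $C(\partial A)''$ to justify the limit over the intersection, you carry out that step explicitly via the downward directedness of the peak sets containing $t$, compactness, and regularity of $|\mu|$ — a more self-contained rendering of the same argument.
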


\begin{proof}
  We need to prove that if $\mu \in C(\partial A)^\prime$ and $\mu \perp A$ then $\mu(\{t\})=0$. Let $ E \subset \partial A$ be a peak set and let $f \in A$ be a function that peaks on $E$. Then $\int{f^n d\mu}=0, n \in \mathds{N}$, and therefore $\int{ \chi_E d\mu} =0$. Because the set $\{t\}$ is intersection of peak sets and $\mu$ defines an order continuous functional on $C(\partial A)^{\prime \prime}$, we have $\int{\chi_{\{t\}}}d\mu = 0$. Thus, $\chi_{\{t\}} \in A^{\perp \perp} = A^{\prime \prime}$. 
  \end{proof}

\section{The main results}

\begin{theorem} \label{t1}
  Let $A$ be a unital uniform algebra, $T_\varphi$ is an isometric endomorphism of $A$ and $w \in A$. Let $T = wT_\varphi$. Assume that $\varphi$ is an open map of $\partial A$ onto itself. Then,
  
  \begin{equation*}
    \sigma_{usf}(T,A) = \sigma_{a.p.}(T,A) = \sigma_{a.p.}(T, C(\partial A)).
  \end{equation*}
\end{theorem}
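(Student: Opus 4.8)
\emph{Reduction to one inclusion.} Two of the three inclusions are automatic. If $\lambda\notin\sigma_{a.p.}(T,A)$ then $\lambda I-T$ is bounded below on $A$, hence injective with closed range, hence upper semi-Fredholm, so $\sigma_{usf}(T,A)\subseteq\sigma_{a.p.}(T,A)$. Since $A$ embeds isometrically in $C(\partial A)$ and $T$ is the restriction to $A$ of the weighted composition operator $\widetilde T\colon g\mapsto w\,(g\circ\varphi)$ on $C(\partial A)$, a normalized approximate eigensequence for $\lambda$ lying in $A$ is also one in $C(\partial A)$, so $\sigma_{a.p.}(T,A)\subseteq\sigma_{a.p.}(T,C(\partial A))$. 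Hence the theorem reduces to the single inclusion
\begin{equation*}
  \sigma_{a.p.}(T,C(\partial A))\subseteq\sigma_{usf}(T,A),
\end{equation*}
that is, one must produce, for each $\lambda$ that is approximately an eigenvalue of $\widetilde T$ on $C(\partial A)$, a \emph{singular} approximate eigensequence for $\lambda$ that lies in $A$.

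\emph{Locating the spectral data.} Fix such a $\lambda$. First I would analyse $\widetilde T$ on $C(\partial A)$ directly. Because $\varphi$ is a continuous open surjection of the compact space $\partial A$ onto itself, $\varphi^n$ is both closed and open for every $n$, so the extremal weight functions attached to the fibres, such as $s\mapsto\max\{|w_n(k)|:\varphi^n(k)=s\}$, are \emph{continuous} in $s$ (closedness gives upper semicontinuity, openness gives lower semicontinuity). This makes it possible to describe $\sigma_{a.p.}(\widetilde T,C(\partial A))$ in dynamical terms and, crucially, to reformulate ``$\lambda\in\sigma_{a.p.}(\widetilde T,C(\partial A))$'' as the existence of an \emph{open} set of ``base points'' $s$ from which a forward $\varphi$-orbit, or --- using surjectivity of $\varphi$ --- a backward $\varphi$-orbit, carries the partial weight products $w_n$ at the rate of order $|\lambda|^n$ needed to run the standard approximate-eigenvector construction. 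Since the $p$-points are dense in $\partial A$, the base point and the whole relevant orbit may then be taken to consist of $p$-points (and, in the non-degenerate case, of non-isolated points).

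\emph{Constructing the approximate eigenfunctions inside $A$.} For a $p$-point $t$ and a neighbourhood $V$ of $t$, compactness gives finitely many peak sets whose intersection $E$ satisfies $t\in E\subseteq V$; the product $h_V\in A$ of the corresponding peaking functions satisfies $\|h_V\|=1$, $h_V\equiv1$ on $E$, and $|h_V|<1$ off $E$, so the powers $h_V^m$ converge pointwise --- hence, by bounded convergence, weak-$*$ in $C(\partial A)^{\prime\prime}$ --- to $\chi_E$, and for large $m$ are uniformly small off $V$. Using such ``bumps'' as building blocks, placed over (small neighbourhoods of) the points of the chosen orbit, with the bump at the $n$-th orbit point carrying the coefficient dictated by the eigenvector recursion (a ratio of the partial weight product $w_n$ to $\lambda^n$), and the whole rescaled to sup-norm $1$, I would assemble for each $\varepsilon>0$ an $f_\varepsilon\in A$ with $\|f_\varepsilon\|=1$ and $\|Tf_\varepsilon-\lambda f_\varepsilon\|<\varepsilon$. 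Here openness of $\varphi$ is used a second, decisive, time: it guarantees that $h_V\circ\varphi$ is again controlled by a bump sitting over $\varphi^{-1}(t)$, so that the terms $w\,(h_V\circ\varphi)^m$ do not leak outside the orbit and ruin the estimate. Taking $\varepsilon=1/n$ gives a normalized approximate eigensequence $(f_n)\subseteq A$.

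\emph{Singularity; and where the difficulty lies.} To finish, one shows $(f_n)$ has no norm-convergent subsequence, and this is exactly where Lemma~\ref{l1} is needed: each $f_n$ weak-$*$ approximates the characteristic function of a peak set shrinking to a non-isolated $p$-point $t$, so a norm limit of a subsequence would be an element of $A^{\prime\prime}$ equal to $\chi_{\{t\}}$, which is impossible since $\chi_{\{t\}}\notin C(\partial A)\supseteq A$ when $t$ is not isolated. Hence $(f_n)$ is singular and $\lambda\in\sigma_{usf}(T,A)$. The main obstacle, to my mind, is the construction step above: forcing the output to live in the (typically very thin) algebra $A$, and not merely in $C(\partial A)$, while keeping $\|Tf_n-\lambda f_n\|$ small. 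Peak functions provide only the one-sided control $|h_V|\le1$, with equality on an entire peak set rather than at the single point $t$; so one must argue that the peak sets can be made to shrink fast enough, and that the $\varphi$-image of a bump remains a bump, and it is precisely the openness hypothesis on $\varphi$ that makes both of these go through.
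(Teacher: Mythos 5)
Your reduction to the single inclusion $\sigma_{a.p.}(T,C(\partial A))\subseteq\sigma_{usf}(T,A)$ is correct and matches the paper. But the construction step --- the heart of the proof --- has a genuine gap, and you have in effect flagged it yourself by calling it ``the main obstacle.'' You propose to assemble an approximate eigenfunction \emph{inside $A$} as a sum of peak-function bumps placed over the points of a $\varphi$-orbit, with coefficients dictated by the eigenvector recursion $w\,(f\circ\varphi)\approx\lambda f$. This cannot work as stated. An element of a uniform algebra cannot be localized: if $A$ is analytic (e.g.\ the disc algebra), no nonzero $f\in A$ vanishes on an open set, so there is no such thing as a bump supported near an orbit; a peak function $h_V$ is only \emph{small}, not zero, off $V$, and its values elsewhere are not at your disposal. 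Worse, the recursion requires controlling $f$ at \emph{forward} images $\varphi(t)$ in terms of $f(t)$, whereas composition with $\varphi$ moves a bump \emph{backwards} and spreads it over the entire fibre $\varphi^{-1}(\cdot)$; the resulting interference between the uncontrolled tails of the various bumps at the various orbit points is exactly what destroys the estimate $\|Tf-\lambda f\|<\varepsilon$, and openness of $\varphi$ does not repair this.

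The paper's resolution is to abandon $A$ and work in the bidual $A^{\prime\prime}$. Lemma~\ref{l1} shows that for a $p$-point $t$ the characteristic function $\chi_{\{t\}}$ \emph{is} an element of $A^{\prime\prime}$ (your observation that $h_V^m\to\chi_E$ weak-$*$ is pointing at precisely this, but you then retreat to finite powers in $A$). In $A^{\prime\prime}$ one can form the genuinely finitely supported combination $h_N=\sum_{i=0}^{2N}(1-1/\sqrt N)^{|i-N|}(T^{\prime\prime})^i g_N$ with $g_N$ a normalized point mass at a $p$-point on a long non-periodic orbit segment (a separate, parallel construction handles orbits that are eventually periodic, and $\lambda=0$ is treated by a disjointness argument using that $\partial A$ has no isolated points); the telescoping estimate gives $\|T^{\prime\prime}h_N-h_N\|/\|h_N\|\to0$, and disjoint supports ($|g_i|\wedge|g_j|=0$) give singularity. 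One then transfers back via the general facts $\sigma_{a.p.}(T^{\prime\prime},A^{\prime\prime})=\sigma_{a.p.}(T,A)$ and $\sigma_{usf}(T^{\prime\prime},A^{\prime\prime})=\sigma_{usf}(T,A)$. Without this passage to $A^{\prime\prime}$ (or some substitute for it), the inclusion you need remains unproved; your dynamical reformulation via continuity of $s\mapsto\max\{|w_n(k)|:\varphi^n(k)=s\}$ and your singularity argument are plausible ingredients, but they do not close the construction gap.
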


\begin{proof}
  The inclusions $\sigma_{usf}(T,A) \subseteq \sigma_{a.p.}(T,A) \subseteq \sigma_{a.p.}(T, C(\partial A))$ are trivial. We need to prove the implication $\lambda \in \sigma_{a.p.}(T,C(\partial A)) \Rightarrow \lambda \in \sigma_{usf}(T,A)$.
  
  Assume first then $\lambda =0$. Let $f_n \in C(\partial_A)$ such that  $\|f_n\|=1$ and $Tf_n \rightarrow 0$.
Because $\partial_A$ has no isolated points there are open non-empty subsets $V_n$ of $\partial_A$ such that
\begin{equation}\label{eq1}
  cl V_n \cap cl V_m = \emptyset , m \neq n,  \; \text{and} \; \min_{cl V_n} |f_n| \geq 1 - 1/n.
\end{equation}
Let $g_n \in A$ be such that
\begin{equation}\label{eq2}
 \|g_n\| = 1 \; \text{and} \; \max_{\partial_A \setminus V_n} |g_n| \leq 1/n.
\end{equation}
It follows from~(\ref{eq1}) and ~(\ref{eq2}) that
\begin{equation}\label{eq3}
|g_n| \leq 2|f_n| + 1/n\; \text{and}\;  g_mg_n \mathop{\rightarrow}_{n> m \to \infty} 0.
\end{equation}
It follows from~(\ref{eq3}) that $Tg_n \rightarrow 0$ and the sequence $g_n$ is singular. Thus, $0 \in \sigma_{usf}(T,A)$.

Next, we assume that $\lambda \in \sigma_{a.p.}(T, C(\partial_A))$ and $\lambda \neq 0$. Without loss of generality we can assume that $\lambda = 1$. 

We will prove that $1 \in \sigma_{a.p.}(T,A)$. Let $f_n \in C(\partial A)$ be such that $\|f_n\| = 1$ and $Tf_n - f_n \rightarrow 0$. We have to consider two cases.

(a) For any $N \in \mathds{N}$ there are an $n > N$ and $t_n \in \partial A$ such that $|f_n(t_n)| > 1 - 1/n$ and the points $\varphi^i(t_n), i = 1, \ldots N$ are pairwise distinct. Because the map $\varphi : \partial A \rightarrow \partial A$ is open and the set of  $p$-points is dense in $\partial A$,  for any $ N \in \mathds{N}$ we can find $n = n(N) \in \mathds{N}$ and $s_n \in \partial A$ such that the point $u_n = \varphi^N(s_n)$ is a $p$-point, the points $\varphi^i(s_n), i = 1, \ldots N$ are pairwise distinct, and 

\begin{equation}\label{eq12}
    \begin{split}
    & |f_n(s_n)| > 1 - 1/n, \\
    & \|T^if_n - f_n\| \leq 1/N, i = 1, \ldots, 2N+1, \\
    & |w_N(s_n)| > 1/2.
  \end{split}
\end{equation}
Let us fix an $N$ and a corresponding $n = n(N)$ such that~(\ref{eq12}) holds .
By Lemma~\ref{l1} we can consider the function $\chi_n = \chi_{\{u_n\}}$ as an element of the second conjugate space $A^{\prime \prime}$. Let $g_N = \frac{1}{w_N(s_n)}\chi_n$ and 
\begin{equation}\label{eq13}
  h_N = \sum \limits_{i=0}^{2N} \bigg{(} 1 -\frac{1}{\sqrt{N}} \bigg{)}^{|i-N|} (T^{\prime \prime})^i g_N
\end{equation}
Let us notice that $h_n(s_n) = (T^{\prime \prime})^Ng_N(s_n) = w_N(s_n)g_N(u_n)=1$ and therefore $\|h_N\| \geq 1$. Moreover, it follows from~(\ref{eq12}) that 
\begin{equation}\label{eq15}
  |g_N| \leq 3|f_n \chi_n|.
\end{equation}

It follows from~(\ref{eq13}) that 

\begin{equation}\label{eq14}
\begin{split}
 & T^{\prime \prime}h_N - h_N = -\bigg{(}(1-\frac{1}{\sqrt{N}} \bigg{)}^N g_N \\
& - \frac{1}{\sqrt{N}} \sum \limits_{i=1}^n \bigg{(} 1 -\frac{1}{\sqrt{N}} \bigg{)}^{|i-N|} (T^{\prime \prime})^i g_N \\
& + \frac{1}{\sqrt{N}} \sum \limits_{i=N+1}^{2N} \bigg{(} 1 -\frac{1}{\sqrt{N}} \bigg{)}^{|i-N|} (T^{\prime \prime})^i g_N \\
& + \bigg{(}(1-\frac{1}{\sqrt{N}} \bigg{)}^N (T^{\prime \prime})^{2N+1} g_N.
 \end{split}
\end{equation}
Notice also that it follows from~(\ref{eq15}) and~(\ref{eq12})  that
\begin{equation}\label{eq16}
  \|(T^{\prime \prime})^{2N+1}g_N\| \leq 3\|(T^{\prime \prime})^{2N+1}f_n\| \leq 6.
\end{equation}

Combining~(\ref{eq12}), ~(\ref{eq15}), ~(\ref{eq14}), and~(\ref{eq16})  we obtain that
\begin{equation}\label{eq17}
  \|T^{\prime \prime}h_N - h_N\| \leq \frac{1}{\sqrt{N}} \|h_N\| +9\bigg{(}(1-\frac{1}{\sqrt{N}} \bigg{)}^N.
\end{equation}
Because $N$ is arbitrary large it follows from~(\ref{eq17}) that $1 \in \sigma_{a.p.}(T^{\prime \prime}, A^{\prime \prime}) = \sigma_{a.p.}(T,A)$. 

(b) If the assumptions we made in case (a) are not satisfied, then without loss of generality we can assume that there are $f_n \in C(\partial A)$, open subsets $V_n \subset \partial A$, a positive integer $m$, and a $c>0$ such that
\begin{enumerate} [(i)]
  \item $\|f_n\| = 1$ and $\|T^jf_n - f_n \| \leq 1/n, j = 1, \ldots, n$.
  \item $\varphi(V_n) = V_n, n \in \mathds{N}$, and all the points of the set $V_n$ are $\varphi$-periodic of the smallest period $m$.
  \item $\max \limits_{cl V_n} |f_n| > c, n \in \mathds{N}$.
  \end{enumerate}
Let $s_n \in V_n$ be a $p$-point such that $|f_n(s_n)| > c$ and let $\chi_n = \chi_{\{s_n\}} \in A^{\prime \prime}$. We claim that $\chi_{n,j} = \chi_{\{\varphi^j(s_n)\}}, j=1, \ldots m-1, \in A^{\prime \prime}$. Indeed, let $\mu \in A^\perp$, then by Lemma~\ref{l1}
\begin{equation*}
  \int \chi_{n,j} d\mu = \frac{1}{w_j(s_n)} \int (T^{\prime \prime})^j \chi_n d\mu = \frac{1}{w_j(s_n)} \int \chi_n d(T^\prime)^j \mu =0.
\end{equation*}
Thus, $\chi_{j,n} \in A^{\perp \perp} = A^{\prime \prime}$.
It follows that the function $h_n$ defined on $\partial A$ as follows
\begin{equation}\label{eq19}
 h_n(t)= \left\{
    \begin{array}{ll}
      f_n(t), & \hbox{if $t \in \{\varphi^j(s_n), j = 0,1, \ldots , m-1\}$;} \\
      0, & \hbox{otherwise}
    \end{array}
  \right.
\end{equation}
is an element of $A^{\prime \prime}$. Next we define $g_n \in A^{\prime \prime}$ as follows
\begin{equation}\label{eq20}
  g_n = h_n + (T^{\prime \prime}h_n - h_n) \Bigg{(} 1 - \frac{1}{\sqrt{n}} \Bigg{)} + \ldots +
((T^{\prime \prime})^{n+1}h_n - (T^{\prime \prime})^nh_n) \Bigg{(} 1 - \frac{1}{\sqrt{n}} \Bigg{)^n}
\end{equation}
Simple calculations similar to~(\ref{eq14}) show that $\|T^{\prime \prime}h_n - h_n\|/\|h_n\| \rightarrow 0$. Hence, $1 \in \sigma_{a.p.}(T,A)$.
It remains to prove that $1 \in \sigma_{usf}(T,A)$. The constructions used in parts (a) and (b) of the proof combined with the condition that $\partial A$ has no isolated points show that we can find $g_n \in A^{\prime \prime}$ such that $\|g_n\|=1$, $T^{\prime \prime}g_n - g_n \rightarrow 0$ and $|g_i|\wedge |g_j| =0$ if $i \neq j$. Thus, the sequence $g_n$ is singular and $1 \in \sigma_{usf}(T^{\prime \prime}, A ^{\prime \prime})= \sigma_{usf}(T,A)$. 
\end{proof}

\begin{remark} \label{r1}
  In the case when $\varphi$ is a homeomorphism of $\partial A$ onto itself Theorem~\ref{t1} is a simple corollary of the results in~\cite{Ki}. Nevertheless, even in this case, as we will see in the next section, Theorem~\ref{t1} provides some useful information about the essential spectra of weighted composition operators on uniform algebras.
\end{remark}

\begin{corollary} \label{c1}
  Assume the conditions of Theorem~\ref{t1}. Assume also that the set of all $\varphi$-periodic points is of first category in $\partial A$. Then the set $\sigma_{usf}(T)$ is rotation invariant.
\end{corollary}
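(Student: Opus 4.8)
The plan is to show that the rotation invariance of $\sigma_{usf}(T)$ follows from Theorem~\ref{t1} together with the genericity hypothesis on the periodic points. By Theorem~\ref{t1} we have $\sigma_{usf}(T)=\sigma_{a.p.}(T,C(\partial A))$, so it suffices to prove that $\sigma_{a.p.}(T,C(\partial A))$ is invariant under multiplication by any unimodular scalar $e^{i\theta}$. Fix $\lambda\in\sigma_{a.p.}(T,C(\partial A))$ with $\lambda\neq 0$ (the point $0$, if it is in the spectrum, is fixed by every rotation), and fix $\theta\in\mathds{R}$; I want to produce a singular sequence witnessing $e^{i\theta}\lambda\in\sigma_{a.p.}(T,C(\partial A))$.

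The key idea is the standard trick of "twisting" an approximate eigenfunction by a unimodular cocycle. If $u\in C(\partial A)$ is unimodular and satisfies the cohomological relation $u\circ\varphi = e^{i\theta}u$ (at least approximately, on the relevant part of $\partial A$), then for $f_n$ with $\|f_n\|=1$ and $Tf_n-\lambda f_n\to 0$ one computes
\begin{equation*}
 T(\bar u f_n) = w\cdot(\bar u\circ\varphi)\cdot(f_n\circ\varphi) = e^{-i\theta}\bar u\, w\,(f_n\circ\varphi) = e^{-i\theta}\bar u\,(Tf_n),
\end{equation*}
so that $T(\bar u f_n) - e^{-i\theta}\lambda(\bar u f_n) = e^{-i\theta}\bar u(Tf_n-\lambda f_n)\to 0$, and $\bar u f_n$ has norm $1$ and is singular whenever $f_n$ is. Thus the whole problem reduces to constructing, for each $\theta$, a unimodular $u\in C(\partial A)$ with $u\circ\varphi=e^{i\theta}u$, or more precisely an approximate version supported near where the $f_n$ concentrate. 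This is exactly where the hypothesis enters: on the set of non-periodic points the forward orbits under $\varphi$ are infinite, and one can solve the coboundary equation $u\circ\varphi=e^{i\theta}u$ locally along orbit segments; since the periodic set is of first category (hence its complement is dense $G_\delta$), the approximate eigenfunctions can be pushed off the periodic set without changing membership in the approximate point spectrum. Concretely I would mimic the dichotomy of cases (a) and (b) in the proof of Theorem~\ref{t1}: in the "aperiodic" case (a) the orbit points $\varphi^i(t_n)$ are pairwise distinct for long stretches, so one assigns to $u$ the values $1, e^{i\theta}, e^{2i\theta},\dots$ along the orbit and extends continuously by Urysohn, obtaining $u\circ\varphi=e^{i\theta}u$ up to an error supported far from $t_n$; the periodicity hypothesis guarantees case (b) cannot occur with a uniform period after passing to a subsequence, because the concentration points $s_n$ could then be taken periodic, contradicting first category.

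The main obstacle I anticipate is making the local-to-global cocycle construction rigorous while keeping the sequence singular: one must simultaneously (i) choose disjoint open sets $V_n$ carrying the mass of $f_n$ so that $|g_i|\wedge|g_j|=0$, (ii) ensure the orbit segments along which $u$ is defined stay inside controllable neighborhoods so Urysohn extension does not destroy the near-unimodularity, and (iii) quantify how long an orbit segment must be (growing with $n$) so that the boundary error term $e^{-i\theta}\bar u(Tf_n-\lambda f_n)$ plus the cocycle defect both go to $0$. All three are handled by the same openness-of-$\varphi$ and density-of-$p$-points arguments already deployed in Theorem~\ref{t1}, so once the reduction to the cocycle equation is in place the remaining work is a routine adaptation. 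Finally, having shown $e^{i\theta}\lambda\in\sigma_{a.p.}(T,C(\partial A))=\sigma_{usf}(T)$ for all $\theta$ and all $\lambda\in\sigma_{usf}(T)$, rotation invariance of $\sigma_{usf}(T)$ follows.
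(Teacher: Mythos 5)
Your opening reduction is exactly the paper's: by Theorem~\ref{t1}, $\sigma_{usf}(T)=\sigma_{a.p.}(T,C(\partial A))$, so the corollary amounts to rotation invariance of the approximate point spectrum of $wT_\varphi$ on $C(\partial A)$ under the first-category hypothesis. At that point the paper stops: it simply cites Theorem~3.7 of~\cite{Ki}, which is precisely the statement you then set out to reprove. So the two arguments coincide on everything the paper actually proves, and differ only in that you attempt to supply a proof of the quoted result rather than quoting it.

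Your sketch of that result is the standard mechanism (twisting an approximate eigenfunction by an approximate unimodular cocycle $u$ with $u\circ\varphi=e^{i\theta}u$, which cleanly converts a $\lambda$-approximate eigenfunction into an $e^{-i\theta}\lambda$-approximate eigenfunction), and the algebra you display is correct. But as written the argument has a genuine gap at exactly the substantive point: a global continuous unimodular solution of $u\circ\varphi=e^{i\theta}u$ generally does not exist, and the approximate, localized version requires first showing that the approximate eigenfunctions $f_n$ can be chosen to concentrate on long orbit segments $t,\varphi(t),\dots,\varphi^N(t)$ consisting of pairwise distinct points, with the mass of $|f_n|$ controlled along the whole segment so that the Urysohn extension of $1,e^{i\theta},\dots,e^{iN\theta}$ produces an error term that is small against $f_n$. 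You name these as anticipated obstacles but do not resolve them; in particular your dismissal of the periodic case (``the concentration points could then be taken periodic, contradicting first category'') is not an argument -- first category does not exclude periodic points, and the actual work is to perturb the concentration points into the dense complement of the periodic set using the openness of $\varphi$ and then requantify the errors. That construction is the entire content of~\cite[Theorem~3.7]{Ki}, so either carry it out in full or, as the paper does, cite it.
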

\begin{proof}
  By Theorem 3.7 in~\cite{Ki} the set $\sigma_{a.p.}(T, C(\partial A))$ is rotation invariant.
\end{proof}
Recall the following definition.

\begin{definition} \label{d1}
  A unital uniform algebra $A$ is called \textbf{analytic} if for any open subset $V$ of $\partial A$ and any $f \in A$ the following implication holds $f|V \equiv 0 \Rightarrow f=0$.
\end{definition}

\begin{corollary} \label{c2}
  Assume the conditions of Theorem~\ref{t1}. Assume additionally that the uniform algebra $A$ is analytic and that $T_\varphi^n \neq I, n \in \mathds{N}$. Then the set $\sigma_{usf}(T)$ is rotation invariant.
\end{corollary}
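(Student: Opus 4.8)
The plan is to reduce to Corollary~\ref{c1} by showing that, under the extra hypotheses, the set of $\varphi$-periodic points in $\partial A$ is of first category. Write $P_n = \{t \in \partial A : \varphi^n(t) = t\}$ for $n \in \mathds{N}$, so that the set $P$ of all $\varphi$-periodic points equals $\bigcup_{n \in \mathds{N}} P_n$. Each $P_n$ is closed, since $\varphi^n$ is continuous and $\partial A$ is Hausdorff; hence it suffices to prove that each $P_n$ has empty interior, for then every $P_n$ is nowhere dense and $P$ is a countable union of nowhere dense sets, i.e. of first category.

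The key step is the implication ``$P_n$ has nonempty interior $\Rightarrow T_\varphi^n = I$'', and this is where analyticity of $A$ enters. Suppose $V \subseteq P_n$ is a nonempty open subset of $\partial A$. Then $\varphi^n(t) = t$ for every $t \in V$, so for an arbitrary $f \in A$ the element $T_\varphi^n f - f = f\circ\varphi^n - f$ of $A$ vanishes identically on $V$. By Definition~\ref{d1} this forces $T_\varphi^n f - f = 0$, and since $f$ was arbitrary we obtain $T_\varphi^n = I$, contradicting the hypothesis $T_\varphi^n \neq I$ for all $n \in \mathds{N}$. Thus no $P_n$ has interior points.

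Finally, apply Corollary~\ref{c1}: the set of $\varphi$-periodic points is of first category in $\partial A$, hence $\sigma_{usf}(T)$ is rotation invariant. I do not anticipate a serious obstacle here; the only point requiring a little care is the reduction itself (closedness of the $P_n$ and the Baire-category bookkeeping), together with checking that ``$\varphi^n$ is the identity on an open subset of $\partial A$'' genuinely promotes to the operator identity $T_\varphi^n = I$ on all of $A$, which is exactly the content of analyticity applied to each $f\circ\varphi^n - f \in A$.
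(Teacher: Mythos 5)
Your proof is correct and is exactly the argument the paper intends (the paper leaves Corollary~\ref{c2} without proof, but its placement immediately after Corollary~\ref{c1} and Definition~\ref{d1} makes the reduction you carried out the evident route). The key step --- each closed set $P_n=\{t\in\partial A:\varphi^n(t)=t\}$ has empty interior because $f\circ\varphi^n-f\in A$ vanishes on any open $V\subseteq P_n$ and analyticity then forces $T_\varphi^n=I$ --- is handled correctly, so the periodic points form a first-category set and Corollary~\ref{c1} applies.
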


In the next theorem we will investigate the conditions that guarantee that the set $\{|\lambda| : \lambda \in \sigma(T)\}$ is connected.

\begin{theorem} \label{t2}
 Let $A$ be a unital uniform algebra, $T_\varphi$ is an isometric endomorphism of $A$ and $w \in A$. Let $T = wT_\varphi$. Assume that $\varphi$ is an open map of $\partial A$ onto itself. The following conditions are equivalent.
 
 \noindent (1). There is a positive real number $r$ such that $\sigma(T) = \sigma_1 \cup \sigma_2$ where $\sigma_i \neq \emptyset, i = 1,2$, $\sigma_1 \subset \{\lambda \in \mathds{C}: |\lambda| < r\}$, and  $\sigma_2 \subset \{\lambda \in \mathds{C}: |\lambda| > r\}$. 
 
 \noindent (2). There is a closed ideal $J$ of $A$ such that
  \begin{enumerate} [(a)]
    \item $TJ \subset J$ and $\sigma(T,J) \subset \{\lambda \in \mathds{C}: |\lambda| < r\}$. 
    \item The algebra $A|h(J)$, where $h(J) \subset \mathfrak{M}_A$, is the hull of the ideal $J$, is closed in $C(h(J))$.
    \item The algebras $A|h(J)$ and $A|E$, where $E = h(J) \cap \partial A$, are isometric and $A|E$ is closed in $C(E)$.
    \item Moreover, $Int_{\partial A}E \neq \emptyset$.
    \item The operator $T = wT_\varphi$ acts on $A|E$ and $\sigma(T, A|E) \subset \{\lambda \in \mathds{C}: |\lambda| > r\}$.
  \end{enumerate}
   
\end{theorem}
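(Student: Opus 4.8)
I would prove the two implications separately, $(1)\Rightarrow(2)$ being the substantial one.

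\emph{The direction $(2)\Rightarrow(1)$.} Given $J$ as in (2) --- note $J\neq A$, since otherwise $h(J)=\emptyset$ and (d) fails, and one must also take $J\neq\{0\}$ --- the inclusion $TJ\subseteq J$ gives the usual three-space relations
\[
\sigma(T,A)\subseteq\sigma(T,J)\cup\sigma(T,A/J),\qquad
\sigma(T,J)\subseteq\sigma(T,A)\cup\sigma(T,A/J).
\]
By (a) the first of these forces $\sigma(T,A)\cap\{|\lambda|\le r\}\subseteq\sigma(T,J)\subset\{|\lambda|<r\}$. For the part of $\sigma(T,A)$ outside the circle I would identify $A/J$, modulo a radical ideal whose $T$-spectrum is controlled by the exact sequence and (a), with $A|h(J)$; by (b) this is a closed subalgebra of $C(h(J))$, and by (c) it is isometrically and $T$-equivariantly (via restriction of Gelfand transforms) identified with $A|E$, so by (e) its $T$-spectrum lies in $\{|\lambda|>r\}$. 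Hence $\sigma(T,A)\subseteq\{|\lambda|<r\}\cup\{|\lambda|>r\}$. Finally, $\sigma_1:=\sigma(T,A)\cap\{|\lambda|<r\}$ contains $\sigma(T,J)$ and is nonempty because $J\neq\{0\}$, while $\sigma_2:=\sigma(T,A)\cap\{|\lambda|>r\}$ contains $\sigma(T,A|E)$ and is nonempty because by (d) the set $E$ has nonempty interior in $\partial A$, hence is nonempty, hence $A|E\supseteq\mathds C$. This is exactly (1).

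\emph{The direction $(1)\Rightarrow(2)$: constructing $J$.} Since the circle $\gamma=\{|\zeta|=r\}$ misses $\sigma(T,A)$, the Riesz projection $P=\tfrac1{2\pi i}\int_\gamma(\zeta I-T)^{-1}\,d\zeta$ is a bounded idempotent commuting with $T$; put $J:=PA$. Then $A=PA\oplus(I-P)A$ with both summands closed and $T$-invariant, $\sigma(T,PA)=\sigma_1\subset\{|\lambda|<r\}$, $\sigma(T,(I-P)A)=\sigma_2\subset\{|\lambda|>r\}$, and (using that $\sigma_1,\sigma_2$ are compact) $0\notin\sigma_2$, so $T$ is invertible on $(I-P)A$. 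The one nontrivial point is that $J$ is an \emph{ideal}. For this I would use $T^n(fg)=(T_\varphi^nf)(T^ng)$ together with the fact that each $T_\varphi^n$ is an isometric endomorphism, which yields $\|T^n(fg)\|\le\|f\|\,\|T^ng\|$; thus for $g\in J$ the forward $T$-orbit of $fg$ decays like $\rho(T|_{PA})^n$, i.e.\ strictly faster than $r^n$. On the other hand, writing $fg=P(fg)+(I-P)(fg)$ and using that $(T|_{(I-P)A})^{-1}$ has spectral radius $1/\min\{|\lambda|:\lambda\in\sigma_2\}<1/r$, a nonzero component $(I-P)(fg)$ would force the orbit of $fg$ to grow at least geometrically with ratio $>r$ --- a contradiction. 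Hence $(I-P)(fg)=0$, so $fg\in J$; thus $J$ is a closed ideal, (a) holds with $\sigma(T,J)=\sigma_1\subset\{|\lambda|<r\}$, and $J\neq\{0\}$, $J\neq A$.

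\emph{The direction $(1)\Rightarrow(2)$: the boundary structure, and the main obstacle.} It remains to extract (b)--(e). The quotient $A/J$ is $T$-isomorphic to $(I-P)A$, so $\sigma(T,A/J)=\sigma_2\subset\{|\lambda|>r\}$; passing to $h(J)=\mathfrak M_{A/J}$ one wants $A|h(J)$ to be a closed uniform subalgebra of $C(h(J))$ with Shilov boundary $E=h(J)\cap\partial A$ (giving (b) and (c)), and then, using $\varphi(\partial A)=\partial A$ and the $T$-invariance of $J$ (so that $(Tf)(e)=w(e)f(\varphi(e))=0$ for $f\in J$ and $e\in E$ with $w(e)\neq0$, plus an openness/density argument to absorb the zero set of $w$), one gets that $E$ is forward $\varphi$-invariant, hence $T=wT_\varphi$ acts on $A|E$ with $\sigma(T,A|E)\subseteq\sigma(T,A/J)\subset\{|\lambda|>r\}$, which is (e). The hard part will be (d): that $E$ has nonempty interior in $\partial A$. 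I would argue by contradiction --- if $E$ were nowhere dense in $\partial A$, then the "expanding" summand $(I-P)A$ of $A$ ought to be forced to $\{0\}$, contradicting $\sigma_2\neq\emptyset$; making this precise is where the openness of $\varphi$ and the analyticity-type rigidity of elements of $A$ on open subsets of $\partial A$ must be brought in, and this step --- converting the abstract spectral gap into a genuine topological splitting of the dynamics of $\varphi$ on $\partial A$ --- is, I expect, the crux of the theorem.
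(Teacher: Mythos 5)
Your treatment of $(2)\Rightarrow(1)$ and of the construction of $J$ in $(1)\Rightarrow(2)$ is sound and matches the paper's route: the paper also takes $J$ to be the spectral subspace $L_1$ corresponding to $\sigma_1$ and deduces that it is a closed ideal from $\|T^n(fg)\|\le\|T^nf\|\,\|g\|$ (your growth-rate argument with $(I-P)(fg)$ is a correct and in fact more explicit version of this one-line step). The problem is that the entire second half of the forward implication --- establishing (b), (c), (d), (e) --- is left as a plan rather than a proof, and you yourself flag (d) as ``the crux'' without resolving it. As written, the proposal therefore has a genuine gap exactly where the content of the theorem lies.

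For the record, the missing mechanism in the paper is quantitative rather than a soft rigidity argument. First, since $P_1'\delta_h=0$ for $h\in h(J)$ and $\|(T^n)'\delta_h\|=|w_n(h)|$, compactness of $h(J)$ gives an $n$ and an $s>r$ with $|w_n(h)|\ge s^n$ on $h(J)$. Second, using that the Shilov boundary of the ideal $J$ equals $\partial A\setminus E$, one shows $E\subset \mathrm{Int}\,(\varphi^{(-m)}(E))$ for some $m$: otherwise there is an open $V\subset\partial A$ with $|w_n|>s_1^n$ on $V$ (where $r<s_1<s$) and $\varphi^n(V)\cap E=\emptyset$, and a function $f\in J$ of norm one peaking at a point of the open set $\varphi^n(V)$ then violates $\|T^nf\|\le r^n\|f\|$. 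Openness of $\varphi$ converts $E\subset \mathrm{Int}\,(\varphi^{(-m)}(E))$ into $\mathrm{Int}_{\partial A}E\neq\emptyset$, i.e.\ (d). Finally (b) and (c) follow from the resulting separation $\min_{h\in E}|w_n(h)|>\sup_{g\in\partial A\setminus\varphi^{(-n)}(E)}|w_n(g)|$, which yields $\|T^nf\|\le D\|T^nf|E\|$ for $f\in L_2$ and hence that $A|E$ and $A|h(J)$ are closed; and the paper additionally shows that $\dot w$ is invertible in $A/J$ and that $\varphi$ restricts to a homeomorphism of $h(J)$, which is what makes $T$ act on $A|E$ in (e). None of these steps appears in your proposal; your suggested contradiction for (d) (``the expanding summand would be forced to $\{0\}$'') is the right intuition, but it needs precisely this chain of estimates to become an argument.
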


\begin{proof}
 $ (1) \Rightarrow (2)$.  Let $L_1$ and $L_2$ be the spectral subspaces of $A$ corresponding to $\sigma_1$ and $\sigma_2$, respectively. The inequalities $\|T^n(fg)\| \leq  \|T^nf\|\|g\|, f,g \in A, n \in \mathds{N}$ show that $L_1$ is a closed ideal in $A$. We denote this ideal by $J$. The spectral subspace $L_2$ is isomorphic to the factor-algebra $A/L_1$. It is well known that the space of maximal ideals of $A/L_1$ can be identified with the hull $h(J)$ of the ideal $J$.
 
 We claim that the restriction of $\varphi$ on $h(J)$ is a homeomorphism of $h(J)$ onto itself.

We will denote the image of $f \in A$ under the canonical map $j$ of $A$ onto $A/J$ by $\dot{f}$. We claim that $\dot{w}$ is invertible in $A/J$. Assume to the contrary that there is an $m \in \mathfrak{M}_{A/J}$ such that
$\dot{w}(m)= 0$. Let $\sigma$ be the canonical homeomorphism of $\mathfrak{M}_{A/J}$ onto $h(J)$ and let $j = \sigma(m)$. Then $w(j) = 0$. By the definition of $h(J)$ we have $f(j)=0$ for any $f \in L_1$. Let $u \in L_2$. Then $u = Tv, v \in L_2$ and therefore $u(j)=w(j)v(\varphi(j))=0$. Thus, $t(j)= 0$ for any $t \in A$, a contradiction.

Let $(\dot{w})^{-1} = \dot{u}$. We claim that the operator $U=\dot{u}\dot{T}$ is an automorphism of the algebra $A/J$. Let $u \in A$ be such that $ju = \dot{u}$. Then $uw = 1 + z$ where $z \in J$. Let $f, g \in A$, then
$uTfg = uwT_\varphi fg= T_\varphi fg +zT_\varphi fg = (T_\varphi f)(T_\varphi g) +zT_\varphi fg =
(uTf - zT_\varphi f)(uTg - zT_\varphi g) +zT_\varphi fg = (uTf)(uTg) + h$, where $h \in J$. Therefore,
$\dot{u}\dot{T}\dot{f}\dot{g}= (\dot{u}\dot{T}\dot{f})(\dot{u}\dot{T}\dot{g})$. The operator $\dot{u}\dot{T}$ is invertible on $A/J$ and therefore, it is an automorphism of this algebra. Let $\psi$ be the corresponding homeomorphism of $\mathfrak{M}_{A/J}$ onto itself. We claim that $\sigma \circ \psi = \varphi \circ \sigma$. Indeed, we have just seen that for any $f \in A$ we have $\dot{(T_\varphi f)} = \dot{u}\dot{T}\dot{f}$.
Therefore, $\varphi (h) = \sigma(\psi(\sigma^{-1}(h)))$ and the restriction of $\varphi$ onto $h(J)$ is a homeomorphism of $h(J)$ onto itself.
 
 Let $P_1$ and $P_2$ be the spectral projections on $L_1$ and $L_2$, respectively. Let $h \in h(J)$. Then $P_1^\prime \delta_h =0$. Therefore, there are an $n \in \mathds{N}$ and $s_h>r$ such that $|w_n(h)| = \|(T^n)^\prime \delta_h\| = \|(T^n)^\prime P_2^\prime \delta_h \| \geq s_h^n$. It follows from the compactness of $h(J)$ that there are an $p \in \mathds{N}$ and $s >r$ such that
\begin{equation}\label{eq8}
  |w_n(h)| \geq s^n, h \in h(J).
\end{equation}
Let $E = h(J) \cap \partial A$. We claim that there is an $m \in \mathds{N}$ such that 
\begin{equation}\label{eq9}
  E \subset Int(\varphi^{(-m)}(E)).
\end{equation}
Fix an $n \in \mathds{N}$ such that $\|T^n f\| \leq r^n \|f\|, f \in L_1$. If~(\ref{eq9}) does not hold, then it follows from~(\ref{eq8}) that there are an open subset $V$ of $\partial A$ and $s_1, r < s_1 < s$ such that $|w_n(g)| > s_1^n, g \in V$ and $\varphi^n(V) \cap E = \emptyset$. Because the set $\varphi^n(V)$ is open in $\partial A$ and $\partial(J) = \partial A \setminus E$ (see e.g.~\cite[p. 186]{Ka}) there is an $f \in J$ such that $\|f\| = f(g_0) = 1$, where $g_0 \in \varphi^n(V)$. Let $h_0 \in V$ be such that $\varphi^n(h_0) = g_0$. Then, $\|T^n f\| \geq |(T^n f)(h_0)|= |w_n(h_0)f(\varphi^n(h_0))| = |w_n(h_0)| > s_1^n$, a contradiction.

We claim that $A|E$ and $A|h(J)$ are closed subalgebras of $C(E)$ and $C(h(J))$, respectively. We need to prove that there is a $C>0$ such that $\|f\| \leq C\|f|E\|, f \in L_2$. Equivalently, we have to prove that there are an $n \in \mathds{N}$ and $D>0$   
 such that 
  \begin{equation}\label{eq10}
   \|T^nf\| \leq D\|T^nf|E\|, f \in L_2 .
 \end{equation}
 On the previous step we have proved that there is an $n \in \mathds{N}$ such that
  \begin{equation}\label{eq11}
   \min_{h \in E} |w_n(h)| > \sup_{ g \in \partial A \setminus \varphi^{(-n)}(E)} |w_n(g)|.
 \end{equation}
 It is immediate to see that~(\ref{eq10}) follows from~(\ref{eq11}). 
 
 Thus, the implication $(1) \Rightarrow (2)$ is proved.
 
 \noindent $(2) \Rightarrow (1)$. This implication is almost trivial. Indeed, it follows from (2) that $\sigma(T, J) \subset \{\lambda \in \mathds{C} : |\lambda| < r\}$ and $\sigma(\dot{T}, A/J) \subset \{\lambda \in \mathds{C} : |\lambda| > r\}$. By the three spaces theorem $\sigma(T,A) \cap \{\lambda \in \mathds{C} : |\lambda| = r\} = \emptyset$
 \end{proof}

 \begin{corollary} \label{c3}
   Assume conditions of Theorem~\ref{t2}. Assume additionally that the algebra $A$ is analytic. Then the set $\{|\lambda|: \lambda \in \sigma(T,A)\}$ is connected. If we also assume that $T_\varphi^n \neq I, n \in \mathds{N}$, then $\sigma(T,A)$ is a connected rotation invariant subset of $\mathds{C}$.
 \end{corollary}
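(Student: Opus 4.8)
The plan is to obtain the first assertion from Theorem~\ref{t2} together with the analyticity of $A$, and then the second assertion by combining Corollary~\ref{c2}, Theorem~\ref{t1}, the elementary fact that the topological boundary of the spectrum lies in the approximate point spectrum, and a soft connectedness argument carried out one circle at a time.

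\emph{Connectedness of $\{|\lambda|:\lambda\in\sigma(T,A)\}$.} Write $M=\{|\lambda|:\lambda\in\sigma(T,A)\}$; it is a compact subset of $[0,\infty)$, being the continuous image of the compact set $\sigma(T,A)$. Suppose $M$ were disconnected. Then $\min M<\max M$, and one can choose $r$ with $\min M<r<\max M$ and $r\notin M$. Put $\sigma_1=\sigma(T,A)\cap\{|\lambda|<r\}$ and $\sigma_2=\sigma(T,A)\cap\{|\lambda|>r\}$; since $\sigma(T,A)$ meets no circle of radius $r$ we get $\sigma(T,A)=\sigma_1\cup\sigma_2$, and both pieces are non-empty because $\min M$ and $\max M$ are attained. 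So condition (1) of Theorem~\ref{t2} holds, hence so does condition (2), with an associated closed ideal $J$. First, $J\neq\{0\}$: otherwise $h(J)=\mathfrak{M}_A$, $E=h(J)\cap\partial A=\partial A$, $A|E=A$, and condition (2)(e) would read $\sigma(T,A)\subseteq\{|\lambda|>r\}$, contradicting $\sigma_1\neq\emptyset$. Second, by the very definition of the hull every $f\in J$ vanishes on $h(J)$, hence on $E$, hence on the non-empty (by part (d)) open subset $Int_{\partial A}E$ of $\partial A$; by Definition~\ref{d1} this forces $f=0$, so $J=\{0\}$ --- a contradiction. Therefore $M$ is connected, i.e.\ $M=[a,b]$ for some $0\le a\le b$.

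\emph{Rotation invariance of $\sigma(T,A)$ under the extra hypothesis $T_\varphi^n\neq I$.} By Corollary~\ref{c2} the set $\sigma_{usf}(T,A)$ is rotation invariant, and by Theorem~\ref{t1} it coincides with $\sigma_{a.p.}(T,A)$; hence $\sigma_{a.p.}(T,A)$ is rotation invariant. Since the topological boundary $\partial\sigma(T,A)$ is always contained in $\sigma_{a.p.}(T,A)$, the residual part $\sigma_r(T,A)=\sigma(T,A)\setminus\sigma_{a.p.}(T,A)$ is contained in the interior $\mathrm{int}\,\sigma(T,A)$. Fix $\lambda\in\sigma(T,A)$ with $\lambda\neq 0$ (the origin, if present, is fixed by every rotation) and set $C=\{\zeta\in\mathds{C}:|\zeta|=|\lambda|\}$. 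If $C\cap\sigma_{a.p.}(T,A)\neq\emptyset$, then rotation invariance of $\sigma_{a.p.}(T,A)$ gives $C\subseteq\sigma_{a.p.}(T,A)\subseteq\sigma(T,A)$. Otherwise $C\cap\sigma(T,A)=C\cap\sigma_r(T,A)=C\cap\mathrm{int}\,\sigma(T,A)$, which is open in $C$; it is also closed in $C$ and contains $\lambda$, so it is a non-empty clopen subset of the connected circle $C$, hence equals $C$. In either case $C\subseteq\sigma(T,A)$, so $\sigma(T,A)$ is rotation invariant. Combined with the first part, $\sigma(T,A)$ is rotation invariant with set of moduli $[a,b]$, so $\sigma(T,A)=\{\lambda\in\mathds{C}:a\le|\lambda|\le b\}$, which is connected.

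I expect the only delicate point to be the first part: one must extract from Theorem~\ref{t2} an ideal that is at once non-trivial and annihilated on a non-empty open subset of $\partial A$ --- exactly the situation forbidden by analyticity. The rotation-invariance part, by contrast, is essentially point-set topology once the rotation invariance of $\sigma_{a.p.}(T,A)$ and the inclusion $\sigma_r(T,A)\subseteq\mathrm{int}\,\sigma(T,A)$ are in hand.
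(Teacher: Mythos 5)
Your proof is correct, and since the paper states Corollary~\ref{c3} without proof, your derivation — deducing connectedness of the modulus set from Theorem~\ref{t2}(2)(d) via the analyticity of $A$ (the ideal $J$ would have to vanish on a non-empty open subset of $\partial A$, hence be trivial, contradicting $\sigma_1\neq\emptyset$), and rotation invariance of $\sigma(T,A)$ from Corollary~\ref{c2}, Theorem~\ref{t1}, and the inclusion of $\partial\sigma(T,A)$ in $\sigma_{a.p.}(T,A)$ — is exactly the intended route. The clopen-subset-of-a-circle argument handling the residual part is the standard way to pass from rotation invariance of $\sigma_{a.p.}$ to rotation invariance of the full spectrum, and it is carried out correctly.
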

 
 The next proposition will be useful for us in the sequel.

 \begin{proposition} \label{p1} Let $A$ be a unital uniform algebra. Assume that $\partial A$ has no isolated points. Let $T_\varphi$ be a periodic automorphism of $A$, i.e. there is an $n \in \mathds{N}$ such that $T_\varphi^n = I$. Let $w \in A$ and $T = wT_\varphi$. For any $t \in \mathfrak{M}_A$ let $p(t)$ be the smallest positive integer such that $\varphi^{p(t)}(t) = t$. Then
\begin{enumerate}
  \item $\sigma(T) = \{\lambda \in \mathds{C}: \exists t \in \mathfrak{M}_A, \lambda^{p(t)} = w_{p(t)}(t)\}$.
  \item $\sigma_{a.p.}(T) = \sigma_{sf}(t) =\{\lambda \in \mathds{C}: \exists t \in \partial A, \lambda^{p(t)} = w_{p(t)}(t)\}$
\end{enumerate}
  \end{proposition}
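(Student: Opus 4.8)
The engine of the proof is the identity $T^{n}=w_{n}T_{\varphi}^{n}=M_{w_{n}}$ (multiplication by $w_{n}\in A$), together with the observation that along the orbit of any $t$ the weight is $\varphi^{p(t)}$-periodic, so $w_{n}(t)=\bigl(w_{p(t)}(t)\bigr)^{n/p(t)}$. I would first dispose of $\lambda=0$: since $T_{\varphi}$ is invertible, $T=M_{w}T_{\varphi}$ is invertible iff $w$ is invertible in $A$ iff $w$ does not vanish on $\mathfrak{M}_{A}$ iff there is no $t\in\mathfrak{M}_{A}$ with $w_{p(t)}(t)=0$, which is (1) at $\lambda=0$; for (2), $T$ fails to be bounded below iff $w$ vanishes somewhere on $\partial A$ iff there is $t\in\partial A$ with $w_{p(t)}(t)=0$, and then $0\in\sigma_{sf}(T)$ by the $\lambda=0$ construction in the proof of Theorem~\ref{t1}. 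For $\lambda\neq 0$ I would replace $w$ by $\lambda^{-1}w\in A$, hence $T$ by $\lambda^{-1}T$; this sends $\lambda$ to $1$ and turns $\lambda^{p(t)}=w_{p(t)}(t)$ into $(\lambda^{-1}w)_{p(t)}(t)=1$, so it suffices to decide membership of $1$.

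For the inclusion $\supseteq$, fix $t$ with $\lambda^{p(t)}=w_{p(t)}(t)$ and put $p=p(t)$; the points $t,\varphi(t),\dots,\varphi^{p-1}(t)$ are distinct, and from $T'\delta_{\varphi^{j}(t)}=w(\varphi^{j}(t))\,\delta_{\varphi^{j+1}(t)}$ one checks that $\nu=\sum_{j=0}^{p-1}\lambda^{-j}w_{j}(t)\,\delta_{\varphi^{j}(t)}$ (with $w_{0}:=1$) satisfies $T'\nu=\lambda\nu$, the one cyclic consistency relation being precisely $\lambda^{p}=w_{p}(t)$. Evaluation functionals at distinct points of the maximal ideal space of a uniform algebra are linearly independent, so $\nu\neq 0$ and $\lambda$ is an eigenvalue of $T'$, hence $\lambda\in\sigma(T)$; this gives (1)$\supseteq$. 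When in addition $t\in\partial A$, I would instead build approximate eigenvectors in $C(\partial A)$: since $\partial A$ has no isolated points, each $\varphi^{j}(t)$ is a limit point, and placing on small pairwise disjoint neighbourhoods of the orbit bump functions with amplitudes $\lambda^{j}/w_{j}(t)$ (defined since $w_{p}(t)=\lambda^{p}\neq 0$) yields $f_{k}\in C(\partial A)$ with $\|f_{k}\|=1$ and $(T-\lambda)f_{k}\to 0$; since a periodic automorphism is an open self-map of $\partial A$, Theorem~\ref{t1} gives $\lambda\in\sigma_{a.p.}(T,C(\partial A))=\sigma_{a.p.}(T,A)=\sigma_{usf}(T,A)$, and, taking the $f_{k}$ singular, $\lambda\in\sigma_{sf}(T)$ as well. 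So (2)$\supseteq$ holds.

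The real work is the inclusion $\subseteq$. After the reduction we must show: if $w_{p(t)}(t)\neq 1$ for every $t$, then $I-T$ is invertible (for (2), equivalently $1\notin\sigma_{a.p.}(T,A)=\sigma_{a.p.}(T,C(\partial A))$, which suffices for $1\notin\sigma_{sf}(T)$ once (1) is known together with $\sigma_{sf}(T)\subseteq\sigma_{usf}(T)=\sigma_{a.p.}(T)$ from Theorem~\ref{t1}). The naive step $(I-T)(I+T+\dots+T^{n-1})=I-T^{n}=M_{1-w_{n}}$ is not enough, because pointwise $1-w_{n}(t)=\prod_{\zeta^{n/p(t)}=1}\bigl(1-\zeta\,w_{p(t)}(t)\bigr)$ and the hypothesis only kills the factor $\zeta=1$, so $M_{1-w_{n}}$ need not be invertible. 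My plan is to invert $I-T$ by a finite induction over the divisor lattice of $n$: the sets $K_{d}=\{t:\varphi^{d}(t)=t\}$, $d\mid n$, are closed and $\varphi$-invariant with $K_{d}\cap K_{d'}=K_{\gcd(d,d')}$, and on the part of $\mathfrak{M}_{A}$ of period exactly $d$ the relevant invertibility datum is $1-w_{d}\neq 0$; one peels off these pieces from the smallest $d$ upward, at each stage restricting $T$ to a suitable $\varphi$-invariant ideal or quotient of $A$ on which $T_{\varphi}$ has strictly smaller period, solving there by the inductive hypothesis, and gluing. The main obstacle is precisely that $t\mapsto p(t)$ is not continuous, so $1-w_{p(\cdot)}(\cdot)$ is not an element of $A$ and cannot simply be inverted; making the local inverses compatible along the overlaps $K_{\gcd(d,d')}$, and keeping control at points whose period drops in the limit, is the delicate point of the whole argument. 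Once this is done, (1) follows by combining the two inclusions, and (2) follows by adding the chain $\sigma_{sf}(T)\subseteq\sigma_{usf}(T)=\sigma_{a.p.}(T)$ together with the observation that the singular sequences produced in the $\supseteq$ step already show $\lambda I-T$ is neither upper nor lower semi-Fredholm on the right-hand side (using again that $\partial A$ has no isolated points), so $\sigma_{sf}(T)=\sigma_{a.p.}(T)$.
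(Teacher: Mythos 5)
Your treatment of $\lambda=0$, the reduction to $\lambda=1$, and the inclusion $\supseteq$ in (1) via the eigenfunctional $\nu=\sum_{j=0}^{p-1}\lambda^{-j}w_j(t)\,\delta_{\varphi^j(t)}$ of $T^\prime$ coincide with what the paper does. But there are two genuine gaps. First, the $\sigma_{sf}$ claims: by the paper's conventions $\lambda\in\sigma_{sf}(T)$ means $\lambda I-T$ is \emph{neither} upper \emph{nor} lower semi-Fredholm, and a singular sequence $f_k$ with $\|f_k\|=1$ and $(T-\lambda)f_k\to 0$ only rules out upper semi-Fredholmness (a surjection with infinite-dimensional kernel admits such a sequence yet is lower semi-Fredholm). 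So ``taking the $f_k$ singular'' does not place $\lambda$ in $\sigma_{sf}(T)$; the same objection applies to your $\lambda=0$ case and to your closing sentence asserting that the sequences from the $\supseteq$ step already defeat lower semi-Fredholmness. You additionally need a singular sequence of approximate eigenvectors of $T^\prime$ in $A^\prime$. The paper supplies exactly this: it chooses pairwise distinct $p$-points $t_n$ with $p(t_n)$ equal to (or a multiple of) $p(t)$ and $w_{p(t_n)}(t_n)\to w_{p(t)}(t)$, and forms $F_n=\sum_j\lambda^{-j}w_j(t_n)\,\delta_{\varphi^j(t_n)}$, which are exact eigenfunctionals for eigenvalues $\lambda_n\to\lambda$ and are singular in $A^\prime$ because the $t_n$ are distinct $p$-points.

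Second, you leave the inclusion $\sigma(T)\subseteq\{\lambda:\exists t,\ \lambda^{p(t)}=w_{p(t)}(t)\}$ unproved: the induction over the divisor lattice of $n$ is only a plan, and you yourself flag its crux (the discontinuity of $t\mapsto p(t)$ and the gluing along the overlaps $K_{\gcd(d,d^\prime)}$) as unresolved, so as written neither (1) nor the corresponding half of (2) is established. The paper's route is much shorter: $\lambda\in\sigma(T)$ gives $\lambda^n\in\sigma(T^n)=\sigma(M_{w_n})=w_n(\mathfrak{M}_A)$ by spectral mapping, hence $\lambda^n=w_n(t)$ for some $t$, and then it invokes $p(t)\mid n$. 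That said, the concern you raise is precisely where this short argument is terse: from $\lambda^n=w_n(t)=\bigl(w_{p(t)}(t)\bigr)^{n/p(t)}$ one only obtains $\lambda^{p(t)}=\zeta\,w_{p(t)}(t)$ for some $(n/p(t))$-th root of unity $\zeta$, and passing to $\zeta=1$ requires further justification (note that the right-hand side of (1) as written need not even be a closed set, e.g.\ when $p(\cdot)$ jumps at an isolated fixed point of $\varphi$ in $\mathfrak{M}_A$). So the reverse inclusion is the step that must be nailed down, and neither your divisor-lattice sketch nor a bare appeal to $T^n=M_{w_n}$ accomplishes it.
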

\begin{proof} (a) We prove the inclusion $\sigma =\{\lambda \in \mathds{C}: \exists t \in \mathfrak{M}_A, \lambda^{p(t)} = w_{p(t)}(t)\} \subseteq \sigma(T)$. The implication $0 \in \sigma \rightarrow 0 \in \sigma(T)$ is obvious. Let $\lambda \in \sigma \setminus \{0\}$ and let $t \in \mathfrak{M}_A$ be such that $\lambda^{p(t)} = w_{p(t)}(t)\}$. Denote by $\delta_s$ the functional $\delta_s(f) = f(s), f \in A, s \in \mathfrak{M}_A$. Let
$F = \delta_t + \lambda^{-1}w(t)\delta_{\varphi(t)} + \ldots + \lambda^{1-p(t)}w_{p(t)-1}(t)\delta_{\varphi^{p(t)-1}(t)}$. Then $T^\prime F = \lambda F$. Moreover, because all the points $t, \varphi(t), \ldots, \varphi^{p(t)-1}(t)$ are distinct, $F \neq 0$ (see e.g.~\cite{Ka}).

(b) To prove that $\sigma(T) \subseteq \sigma$ assume that $\lambda \in \sigma(T)$. Let $n$ be the smallest positive integer such that $T_\varphi^n = I$. Then $\lambda^n \in \sigma(T^n) = \sigma(w_n)$, and therefore there is a 
$t \in \mathfrak{M}_A$ such that $\lambda^n = w_n(t)$. But $p(t)$ divides $n$ and therefore $\lambda^{p(t)} = w_{p(t)}(t)$.

(c) Let $t \in \partial A$ and $\lambda \in \mathds{C}$ be such that $\lambda^{p(t)} = w_{p(t)}(t)$. The case $\lambda =0$ is easy, and therefore we will assume that $\lambda \neq 0$. There are two possibilities.

\noindent (c1) There are pairwise distinct $p$-points $t_n \in \partial A$ such that $p(t_n) = p(t)$ and $w_{p(t)}(t_n) \rightarrow w_{p(t)}(t)$. Let $\lambda_n \in \mathds{C}$ be such that $\lambda_n^{p(t)} = w_{p(t)}(t_n)$ and $\lambda_n \rightarrow \lambda$. Then $\chi_n = \chi_{t_n} \in A^{\prime \prime}$ and $T^{\prime \prime}F_n = \lambda_n G_n$, where $G_n = \chi_n + \lambda^{-1}T^{\prime \prime}\chi_n + \ldots + \lambda^{1-n}(T^{\prime \prime})^{n-1}\chi_n$. Clearly the sequence $G_n$ is singular in $A^{\prime \prime}$, and therefore $\lambda \in \sigma_{usf}(T)$.

Similarly, if $F_n = \delta_{t_n} + \lambda^{-1}w(t)\delta_{\varphi(t_n)} + \ldots + \lambda^{1-p(t)}w_{p(t)-1}(t)\delta_{\varphi^{p(t)-1}(t_n)}$, then $T^\prime F_n = \lambda_n F_n$. It follows easily from the fact that $t_n$ are $p$-points that the sequence $F_n$ is singular in $A^\prime$. Thus, $\lambda \in \sigma_{sf}(T)$.

\noindent (c2)  There are pairwise distinct $p$-points $t_n \in \partial A$ such that  $p(t_n) = p$, $p(t)$ divides $p$, and $w_p(t_n) \rightarrow w_{p(t)}(t)$. We can apply the same reasoning as in the case (c1).
 \end{proof}

From Proposition~\ref{p1} we derive our next result

\begin{theorem} \label{t3}
  Let $A$ be a unital analytic uniform algebra, $U = T_\varphi$ be a non-periodic automorphism of $A$, and $T = wT_\varphi$. Assume that
\begin{enumerate} [(a)]
  \item $\partial A$ is a connected set.
  \item There are periodic automorphisms, $U_n = T_{\varphi_n}$ of $A$ such that for any $t \in \partial A$ we have $\varphi_n(t) \rightarrow \varphi(t)$.
\item Let $\mu \in (C(\partial A))^\prime$ be a $\varphi$-invariant probability measure. Then $\mu$ is $\varphi_n$-invariant, $n \in \mathds{N}$.
\end{enumerate}
Then 
\begin{enumerate}
  \item $\sigma_{a.p.}(T) = \sigma_{sf}(T)$.
  \item The sets $\sigma(T)$ and $\sigma_{sf}(T)$ are connected rotation invariant subsets of $\mathds{C}$.
  \item If either $w \in A^{-1}$ or $\min \limits_{s \in \partial A} |w(s)| = 0$, then $\sigma(T) = \sigma_{sf}(T)$.
\end{enumerate}
\end{theorem}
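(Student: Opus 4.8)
The plan is to peel off first the assertions that follow from results already established, and then to reduce everything that remains to statement~(1), namely $\sigma_{sf}(T)=\sigma_{a.p.}(T)$. Since $U=T_\varphi$ is an automorphism of $A$, the induced self-map $\varphi$ of $\partial A$ is a homeomorphism, in particular an open surjection, so Theorem~\ref{t1} and Corollaries~\ref{c2} and~\ref{c3} all apply (we are given that $A$ is analytic and that $T_\varphi^n\neq I$). Corollary~\ref{c3} yields at once that $\sigma(T,A)$ is a connected rotation invariant subset of $\mathds{C}$, i.e.\ a closed annulus $\{\lambda:a\le|\lambda|\le b\}$, a closed disc when $a=0$. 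Theorem~\ref{t1} gives $\sigma_{usf}(T,A)=\sigma_{a.p.}(T,A)=\sigma_{a.p.}(T,C(\partial A))$, and Corollary~\ref{c2} shows this common set is rotation invariant. Thus it remains to prove~(1); the connectedness of $\sigma_{sf}(T)$ in~(2) will come out of the annular description obtained along the way, and~(3) will then be a short deduction.

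For~(1) the inclusion $\sigma_{sf}(T)\subseteq\sigma_{usf}(T)=\sigma_{a.p.}(T)$ is immediate, and the whole content is the reverse one. The idea is to transfer information from the weighted periodic automorphisms $T_n=wT_{\varphi_n}$ supplied by~(b). As $\partial A$ is connected with more than one point it has no isolated points, so Proposition~\ref{p1} applies to each $T_n$: writing $p_n(t)$ for the $\varphi_n$-period of $t$ and $w_{n,k}=w\,(w\circ\varphi_n)\cdots(w\circ\varphi_n^{k-1})$,
\[
\sigma_{sf}(T_n)=\sigma_{a.p.}(T_n)=\{\lambda:\ \exists\,t\in\partial A,\ \lambda^{p_n(t)}=w_{n,p_n(t)}(t)\},
\]
with the analogous formula over $\mathfrak{M}_A$ for $\sigma(T_n,A)$. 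The moduli occurring on the right are precisely the numbers $\exp\int_{\partial A}\log|w|\,d\nu$ as $\nu$ runs over the orbit (equivalently, ergodic $\varphi_n$-invariant) measures, and their extreme values realise $r(T_n)$ and its inner analogue. Using~(b) --- pointwise convergence $\varphi_n\to\varphi$ on the compact space $\partial A$, which one upgrades to uniform convergence, together with the consequent convergence of the iterated weights on finite blocks and weak-$*$ subconvergence of orbit measures to $\varphi$-invariant measures --- and~(c), which places the $\varphi$-invariant probability measures inside the $\varphi_n$-invariant ones, I would identify $\sigma_{a.p.}(T,C(\partial A))$ with the closed annulus whose outer radius is $r(T)=\exp\max\int\log|w|\,d\mu$ over $\varphi$-invariant probability measures on $\partial A$ and whose inner radius is $\exp\min\int\log|w|\,d\mu$ over the same family (read as $0$ when $w$ vanishes on $\partial A$); likewise $\sigma(T,A)$ is the annulus with the same outer radius but inner radius taken over $\varphi$-invariant measures on all of $\mathfrak{M}_A$. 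Hypothesis~(a) is what forces these radii to fill genuine intervals rather than disconnected unions, consistently with Corollary~\ref{c3}. Finally, passing to the limit in the displayed formula for $\sigma_{sf}(T_n)$ --- and noting that the concrete (approximate) eigenfunctionals built in the proof of Proposition~\ref{p1} out of the functions $\chi_{\{t\}}\in A''$ of Lemma~\ref{l1} can be arranged to be singular, precisely because $\partial A$ has no isolated points --- would give $\sigma_{sf}(T)$ the same annular description. Hence $\sigma_{sf}(T)=\sigma_{a.p.}(T)$, proving~(1); being a closed annulus it is connected, and it is rotation invariant by Corollary~\ref{c2}, which finishes~(2).

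For~(3): if $w\in A^{-1}$ then $T=wT_\varphi$ is invertible, so $0\notin\sigma(T)$ and the connected rotation invariant set $\sigma(T)$ is an annulus $\{\lambda:a\le|\lambda|\le b\}$ with $a>0$; its topological boundary $\{|\lambda|=a\}\cup\{|\lambda|=b\}$ lies in $\sigma_{a.p.}(T)=\sigma_{sf}(T)$, and a closed annulus $\sigma_{sf}(T)$ containing both bounding circles must equal $\sigma(T)$. If instead $\min_{s\in\partial A}|w(s)|=0$, then $T$ is not bounded below on $C(\partial A)$ --- the map $f\mapsto f\circ\varphi$ is a surjective isometry of $C(\partial A)$, so the question reduces to multiplication by $w$ --- hence $0\in\sigma_{a.p.}(T,C(\partial A))=\sigma_{sf}(T)$; since also $0\in\sigma(T)$ (because $w\notin A^{-1}$), $\sigma(T)$ is a disc $\{\lambda:|\lambda|\le b\}$, and a closed annulus $\sigma_{sf}(T)$ containing $0$ and $\{|\lambda|=b\}$ is that whole disc. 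In both cases $\sigma(T)=\sigma_{sf}(T)$.

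The step I expect to be the main obstacle is the limiting argument of the second paragraph. There is no operator-norm convergence $T_n\to T$: the composition operators $T_{\varphi_n}$ and $T_\varphi$ stay at distance about $2$ as soon as $\varphi_n\neq\varphi$, so the standard upper semicontinuity of the (semi-Fredholm) spectra under perturbation is unavailable. One must argue directly with the explicit dynamical data of Proposition~\ref{p1}: control $\bigl(\max_{t}|w_{n,k}(t)|\bigr)^{1/k}$, the orbit measures $\tfrac1k\sum_{j<k}\delta_{\varphi_n^{j}(t)}$, and their weak-$*$ limits. Hypothesis~(c) is exactly what keeps the inner radius of $\sigma_{a.p.}(T_n)$ from collapsing below that of $T$ (the $\varphi$-invariant measures remain available to every $T_n$), while~(b) together with~(a) supply the matching reverse estimate and the interval/connectedness structure; making these balance is the delicate point.
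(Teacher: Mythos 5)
Your reduction of the easy parts is exactly right and matches the paper: $\varphi$ is a homeomorphism of $\partial A$, so Theorem~\ref{t1} gives $\sigma_{usf}(T)=\sigma_{a.p.}(T)$, Corollary~\ref{c2} gives rotation invariance of that set, and Corollary~\ref{c3} gives that $\sigma(T)$ is a connected rotation invariant set; your deduction of (3) from (1) and (2) via $\partial\sigma(T)\subseteq\sigma_{a.p.}(T)$ is also sound. But the core of the theorem is statement (1) together with the connectedness in (2), and there your sketch stops at precisely the point where the work is. Since $\lambda I-T$ is semi-Fredholm when it is \emph{either} upper or lower semi-Fredholm, $\sigma_{sf}(T)=\sigma_{usf}(T)\cap\sigma_{lsf}(T)$, so for each $\lambda\in\sigma_{a.p.}(T)$ you must produce \emph{two} things: a singular approximate eigenvector (handled by Theorem~\ref{t1}) and a singular approximate eigenfunctional in $A'$. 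Your proposal to ``pass to the limit in the displayed formula for $\sigma_{sf}(T_n)$'' is not an argument: as you yourself note, $T_n\not\to T$ in norm, and the singular (co)eigenvectors built for $T_n$ in Proposition~\ref{p1} do not converge to anything useful for $T$. What actually passes to the limit in the paper is the \emph{dynamical data}: from Proposition~\ref{p1} and the criteria of~\cite{Ki} one extracts points $t_n\in\partial A$ with the two-sided iterated-weight inequalities $|w_{m,n}(t_n)|\ge|\lambda|^m$ and $|w_{m,n}(\varphi_n^{-m}(t_n))|\le|\lambda|^m$ for all $m$; a cluster point $t$ of the $t_n$ then satisfies the same inequalities for $\varphi$, which by~\cite{Ki} puts $\lambda$ in $\sigma_{a.p.}(T)$ (hence in $\sigma_{usf}(T)$), and the same procedure applied to $Sf=w\,(f\circ\varphi^{-1})$ produces a point whose full two-sided orbit satisfies the hypotheses~(\ref{eq22}) of Proposition~\ref{p2}, which is the tool that delivers $\lambda\in\sigma_{lsf}(T)$. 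Proposition~\ref{p2} appears nowhere in your sketch, and without it (or an equivalent construction of singular functionals $\mu_n\in A'$ along a backward orbit) the inclusion $\sigma_{a.p.}(T)\subseteq\sigma_{lsf}(T)$ is simply not established.

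Two smaller points. First, hypothesis~(b) is only pointwise convergence $\varphi_n(t)\to\varphi(t)$; your parenthetical ``which one upgrades to uniform convergence'' is unjustified (pointwise convergence of continuous maps on a compactum does not self-improve without equicontinuity), and in any case what is needed is convergence of finite blocks $w_{m,n}$ along the chosen points, which is exactly what the inequality-extraction argument is designed to control. Second, your identification of $\sigma_{a.p.}(T,C(\partial A))$ with an annulus determined by $\exp\int\log|w|\,d\mu$ over invariant measures is at best a heuristic for the invertible-weight case; the criterion that actually survives the limit, and the one cited from~\cite{Ki}, is the pointwise two-sided orbit inequality, and hypothesis~(a) enters not through the measure picture but through Proposition~\ref{p1}(2) applied to the periodic $\varphi_n$, where connectedness of $\partial A$ guarantees that every modulus in $[\rho_{min}(T_n),\rho(T_n)]$ is realized as $|w_{n,p(t)}(t)|^{1/p(t)}$ for some $t\in\partial A$. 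In short: the skeleton is right, but the two load-bearing ingredients --- the limit passage through the orbit inequalities of~\cite{Ki} and the application of Proposition~\ref{p2} for the lower semi-Fredholm half --- are missing.
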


\begin{proof} The set $\sigma(T)$ is connected and rotation invariant by Corollary~\ref{c3}. The set $\sigma_{a.p.}(T) = \sigma_{usf}(T)$ is rotation invariant by Corollary~\ref{c2}.

To prove that the set $\sigma_{a.p.}(T)$ is connected assume first that $w$ is invertible in $C(\partial A)$.
It follows from~\cite{Ki} that $|\sigma(T)| = [\rho_{min}(T), \rho(T)]$. Let $|\lambda| \in [\rho_{min}(T), \rho(T)]$. For any $n \in \mathds{N}$ let $T_n = wU_n$. It follows from (c) and~\cite{Ki} that
$\rho_{min}(T_n) \leq \rho_{min}(T)$ and $\rho(T_n) \geq \rho(T)$. Next, it follows from (a) and from (2) in Proposition~\ref{p1} that $|\lambda| \in |\sigma_{a.p.}(T_n)|$. Therefore (see~\cite{Ki}), there is $t_n \in \partial A$ such that
  \begin{equation}\label{eq34}
  |w_{m,n}(t_n)| \geq |\lambda|^m, \; \text{and} \; |w_{m,n}(\varphi_n^{-m}(t_n))| \leq |\lambda|^m, m \in \mathds{N}.
\end{equation}
where $w_{m,n}(t) = w(t)w(\varphi_n(t)) \ldots w(\varphi_n^{m-1}(t)$. Let $t \in \partial A$ be a limit point of the sequence $t_n$. It follows from~(\ref{eq34}) and from (b) that
\begin{equation}\label{eq35}
  |w_m(t)| \geq |\lambda|^m, \; \text{and} \; |w_m(\varphi^{-m}(t))| \leq |\lambda|^m, m \in \mathds{N}.
\end{equation}
It follows from~(\ref{eq35}) (see~\cite{Ki}) that $\lambda \in \sigma_{a.p.}(T)$ and by Theorem~\ref{t1} $\lambda \in \sigma_{usf}(T)$.

To prove that $\lambda \in \sigma_{lsf}(T)$ we notice that applying the previous arguments to the operator $S$,
$Sf(z) = w(z)f(\varphi^{-1}(z)), f \in \mathds{A}, z \in \mathds{D}$ we can find a point $s \in \partial A$ such that
\begin{equation}\label{eq36}
  |w_m(s)| \leq |\lambda|^m, \; \text{and} \; |w_m(\varphi^{-m}(s))| \geq |\lambda|^m, m \in \mathds{N}.
\end{equation}
It follows from~(\ref{eq36}) and Proposition~\ref{p2} below that $\lambda \in \sigma_{lsf}(T)$.

To finish the proof it remains to notice that if $\min \limits_{s \in \partial A} |w(s)|=0$ then $|\sigma(T)| = [0, \rho(T)]$ and $|\sigma(T_n)|=[0, \rho(T_n)]$, $n \in \mathds{N}$, and to apply the arguments from the previous part of the proof.
\end{proof}

\begin{remark} \label{r4}
  The proof of Theorem~\ref{t3} shows that instead of condition (a) we can assume a weaker condition

\noindent ($\acute{a}$) $\partial A$ is the union of a finite number of connected sets $\partial_1, \ldots , \partial_m$ such that $\varphi(\partial_j) = \partial_j$ and $\varphi_n(\partial_j) = \partial_j$ , $j= 1, \ldots , m$, $n \in \mathds{N}$.
\end{remark}

The next proposition and Corollary~\ref{c4} will enable us to obtain in some cases the description of the lower semi-Fredholm spectrum.

\begin{proposition} \label{p2}
 Let $A$ be a unital uniform algebra, $T_\varphi$ be an isometric endomorphism of $A$ and $w \in A$. Assume that the map $\varphi : \partial A \rightarrow \partial A$ is open, that $\partial A$ has no isolated points, and that the set of $\varphi$-periodic points is a set of first category in $\partial A$. Assume also that there are $\lambda \in \mathds{C}$ and $t_n \in \partial A, n \in \mathds{Z}$ such that
\begin{equation}\label{eq22}
\begin{split}
& \varphi(t_n) = t_{n+1}, n \in \mathds{Z}, \\
 & |w_n(t_0)| \leq |\lambda|^n, \; \text{and} \; |w_n(t_{-n})| \geq |\lambda|^n, n \in \mathds{N}.
\end{split}
\end{equation}
Then, $\lambda \in \sigma_{lsf}(T)$.
\end{proposition}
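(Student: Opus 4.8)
The plan is to use the identity $\sigma_{lsf}(T,A)=\sigma_{usf}(T',A')$ from the preliminaries and to exhibit a \emph{singular} normalized sequence in $A'$ that is an approximate eigen-sequence of $T'$ for the eigenvalue $\lambda$. First I would dispose of $\lambda=0$: then~(\ref{eq22}) forces $w(t_0)=0$, so by continuity of $w$ and density of $p$-points there are $p$-points $s$ with $|w(s)|$ arbitrarily small; since $\partial A$ has no isolated points one can pick infinitely many such $s$ with pairwise disjoint neighbourhoods, and the point evaluations $\delta_s\in A'$ (of norm $1$) form a singular sequence with $T'\delta_s=w(s)\delta_{\varphi(s)}\to 0$, so $0\in\sigma_{usf}(T',A')$. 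For $\lambda\neq 0$, rescaling reduces us to $\lambda=1$; and using that $\varphi$ is open, that $p$-points are dense and that the periodic points are of first category, I would first reduce to the case in which the points $t_n$ are pairwise distinct $p$-points (the remaining periodic and degenerate configurations being handled by the same perturbation device as in case (a) of the proof of Theorem~\ref{t1}).

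Next I would set up the building blocks. Since $T'\delta_t=w(t)\delta_{\varphi(t)}$, define scalars $c_n$ by $c_0=1$, $c_n=w_n(t_0)$ for $n\geq 1$ and $c_{-n}=1/w_n(t_{-n})$ for $n\geq 1$, so that $u_n:=c_n\delta_{t_n}$ satisfies $T'u_n=u_{n+1}$ for every $n\in\mathds{Z}$. Hypothesis~(\ref{eq22}) says precisely that $\|u_n\|=|c_n|\leq 1$ for all $n$, while $\|u_0\|=1$. The heart of the argument — and the step I expect to be the main obstacle — is a norm lower bound: for every finitely supported family of nonnegative reals $a_n$ one has $\big\|\sum_n a_n u_n\big\|_{A'}\geq\sum_n a_n|c_n|$. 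This is where $A''$ enters: because each $t_n$ is a $p$-point, Lemma~\ref{l1} gives $\chi_{\{t_n\}}\in A''$, hence (the sum being finite) $g:=\sum_n(\overline{c_n}/|c_n|)\chi_{\{t_n\}}$ belongs to $A''$ and, regarded inside $C(\partial A)''$, has norm $1$; pairing $g$ against $\sum_n a_n u_n$ produces $\sum_n a_n c_n\cdot(\overline{c_n}/|c_n|)=\sum_n a_n|c_n|$. Informally: although $A$ itself need not contain a function that is a prescribed unimodular constant on each $t_n$ and zero elsewhere, $A''$ does, so the point masses along the orbit behave $\ell^1$-independently over $A''$.

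With this in hand I would assemble the sequence. Form the damped two-sided sums $F_N:=\sum_{|n|\leq N}\rho_N^{|n|}u_n$ with $\rho_N=1-1/\sqrt N$. A telescoping computation analogous to~(\ref{eq14}) gives $\|(T'-1)F_N\|_{A'}\leq\|(T'-1)F_N\|_{C(\partial A)'}\leq C$ for an absolute constant $C$, while the lower bound above gives $\|F_N\|_{A'}\geq\sum_{|n|\leq N}\rho_N^{|n|}|c_n|$. If $\sum_{n\in\mathds{Z}}|c_n|=\infty$ this tends to $\infty$, so $F_N/\|F_N\|_{A'}$ is a normalized approximate null sequence for $T'-1$. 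If instead $\sum_{n\in\mathds{Z}}|c_n|<\infty$ (in particular if $w$ vanishes somewhere on the orbit, which collapses one tail), the series $\mu_\infty:=\sum_{n\in\mathds{Z}}c_n\delta_{t_n}$ converges absolutely in $C(\partial A)'$ and, with no surviving boundary term, $T'\mu_\infty=\mu_\infty$; the same pairing with $g$ shows $\|\mu_\infty\|_{A'}\geq\sum_n|c_n|\geq 1$, so $\mu_\infty$ represents a genuine eigenvector of $T'$ for the eigenvalue $1$.

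Finally, to upgrade "approximate eigenvalue of $T'$" to "$1\in\sigma_{usf}(T',A')$" one must produce a \emph{singular} sequence. Here I would invoke once more that $\partial A$ is perfect, that $\varphi$ is open, that $p$-points are dense and periodic points meager, exactly as in the closing paragraph of the proof of Theorem~\ref{t1}: one runs the above construction along infinitely many auxiliary non-periodic $p$-orbits whose orbit-segments (up to the relevant length) are pairwise disjoint and along which the moduli of the partial weight products $w_n(\cdot)$ approximate those along $\{t_n\}$; the resulting unit functionals in $A'$ are then supported on mutually disjoint sets, hence admit no norm-convergent subsequence, while $T'-1$ still sends them to $0$. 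This yields $1\in\sigma_{usf}(T',A')=\sigma_{lsf}(T,A)$. The two places demanding genuine care are the $A''$ norm lower bound of the third paragraph and the bookkeeping needed to keep the auxiliary orbits disjoint while preserving the weight estimates up to the required length; the remainder is routine telescoping together with the standard reductions already used for Theorem~\ref{t1}.
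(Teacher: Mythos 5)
Your proposal follows the same overall strategy as the paper's proof of Proposition~\ref{p2}: dispose of $\lambda=0$ via point evaluations at points where $|w|$ is small; for $\lambda=1$ form damped two\--sided sums of point masses along (a perturbation of) the orbit $\{t_n\}$, show by telescoping that they are approximate fixed points of $T^\prime$, obtain an $\ell^1$-type lower bound for their $A^\prime$-norms using $p$-points, and finally achieve singularity by running the construction on disjointly supported orbit segments. The one step you carry out genuinely differently is the norm lower bound: the paper computes $\|\mu_n\|$ by testing against powers of a peak function (the argument around~(\ref{eq27})), whereas you pair against the norm\--one element $\sum_n(\overline{c_n}/|c_n|)\chi_{\{t_n\}}$ of $A^{\prime\prime}$ furnished by Lemma~\ref{l1}. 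This is cleaner and isolates the real content, namely that point masses along the orbit are $\ell^1$-independent over $A$.

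Two points need repair. First, your reduction ``to the case in which the $t_n$ are pairwise distinct $p$-points'' is not a consequence of density of $p$-points alone: forward $\varphi$-images of $p$-points need not be $p$-points (only $\varphi$-preimages of peak sets are automatically peak sets), so arranging that \emph{every} point of a length-$(2N+1)$ orbit segment is a $p$-point --- which your pairing element requires, since each $\chi_{\{t_n\}}$ must lie in $A^{\prime\prime}$ --- takes an argument; note that the paper's condition~(\ref{eq24}) only asks that one endpoint of the segment be a $p$-point, and the natural route to your stronger requirement is to make the terminal point a $p$-point and control preimages along the segment. Second, the asserted bound $\|(T^\prime-1)F_N\|\leq C$ with $C$ an absolute constant is false when $\sum_{|n|\leq N}\rho_N^{|n|}|c_n|\to\infty$: the telescoping as in~(\ref{eq14}) and~(\ref{eq26}) actually gives
$\|(T^\prime-1)F_N\|\leq \frac{1}{\rho_N\sqrt{N}}\,\|F_N\|+\rho_N^{N}\bigl(|c_{N+1}|+|c_{-N}|\bigr)\leq \frac{1}{\rho_N\sqrt{N}}\,\|F_N\|+2\rho_N^{N}$,
not a constant. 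Fortunately this sharper estimate, together with $\|F_N\|\geq|c_0|=1$, forces $\|(T^\prime-1)F_N\|/\|F_N\|\to 0$ in all cases, so your case split on the convergence of $\sum|c_n|$ is unnecessary; this is exactly how the paper's estimate~(\ref{eq28}) proceeds.
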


\begin{proof}
  Let $\lambda = 0$. It follows from~(\ref{eq22}) and our assumptions that there are pairwise distinct $p$-points $s_n \in \partial A$ such that $|w(s_n)| < 1/n$. Then $\|\delta_{s_n}\| =1$ and $T^\prime \delta_{s_n} = w(s_n)\delta_{\varphi(s_n)} \rightarrow 0$. The sequence $\delta_{s_n}$ is obviously singular and therefore 
$0 \in \sigma_{lsf}(T)$.

If $\lambda \neq 0$ we can without loss of generality assume that $\lambda = 1$ and that
\begin{equation}\label{eq23}
\begin{split}
& \varphi(t_n) = t_{n+1}, n \in \mathds{Z}, \\
 & |w_n(t_0)| \leq 1, \; \text{and} \; |w_n(t_{-n})| \geq 1, n \in \mathds{N}.
\end{split}
\end{equation}
From our assumptions follows that for any $n \in \mathds{N}$ there are pairwise distinct points $u_j \in \partial A, -n \leq j \leq n+1$ such that $u_{n+1}$ is a $p$-point and
\begin{equation}\label{eq24}
\begin{split}
& \varphi(u_j) = u_{j+1}, -n \leq j \leq n, \\
 & |w_j(u_0)| \leq 2, j=1, \ldots , n+1 \; \text{and} \; |w_j(u_{-j})| \geq 1/2, \; j = 1, \ldots , n.
\end{split}
\end{equation}
We define $\mu_n \in A^\prime$ as follows
\begin{equation}\label{eq25}
 \mu_n = \frac{1}{w_n(u_{-n})} \sum \limits_{j=-n}^n \bigg{(} 1 -\frac{1}{\sqrt{n}} \bigg{)}^{|j|} (T^\prime)^{j+n} \delta_{u_{-n}}.
\end{equation}

In particular, $\|\mu_n\| \geq 1$.
We claim that $\|T^\prime \mu_n - \mu_n\|/\|\mu_n\| \mathop \rightarrow \limits_{n \rightarrow \infty} 0$. First let us notice that
\begin{equation}\label{eq27}
  \|\mu_n\| = \frac{1}{|w_n(u_{-n})|} \sum \limits_{j=-n}^n \bigg{(} 1 -\frac{1}{\sqrt{n}} \bigg{)}^{|j|} |w_{j+n}(u_{-n})|.
\end{equation}
Indeed, because $u_{-n}$ is a $p$-point, there is an $f \in A$ such that $f(u_{-n})=1 = \|f\|$ and $|f(\varphi^j(u_{-n})| < 1, j = 1, \ldots , 2n$. Let $g = \sum \limits_{j=0}^{2n} T_\varphi^j f$.
Then, $\|g^m\| \mathop \rightarrow \limits_{m \rightarrow \infty} 1$ and
\begin{equation*}
  \lim \limits_{m \rightarrow \infty} \int g^m d\mu_n = \frac{1}{|w_n(u_{-n})|} \sum \limits_{j=-n}^n \bigg{(} 1 -\frac{1}{\sqrt{n}} \bigg{)}^{|j|} |w_{j+n}(u_{-n})|.
\end{equation*}

Similarly to~(\ref{eq14}) we can write
 \begin{equation}\label{eq26}
\begin{split}
 & T^{\prime}\mu_n - \mu_n = \frac{1}{w_n(u_{-n})}\bigg{[} -\bigg{(}(1-\frac{1}{\sqrt{n}} \bigg{)}^n \delta_{u_{-n}} \\
& - \frac{1}{\sqrt{n}} \sum \limits_{i=1}^n \bigg{(} 1 -\frac{1}{\sqrt{n}} \bigg{)}^{|i-n|} (T^{\prime})^i \delta_{u_{-n}} \\
& + \frac{1}{\sqrt{n}} \sum \limits_{i=n+1}^{2n} \bigg{(} 1 -\frac{1}{\sqrt{n}} \bigg{)}^{|i-n|} (T^{\prime})^i \delta_{u_{-n}} \\
& + \bigg{(}(1-\frac{1}{\sqrt{n}} \bigg{)}^n (T^{\prime})^{2n+1} \delta_{u_{-n}}\Bigg{]}.
 \end{split}
\end{equation}
It follows from~(\ref{eq26}), ~(\ref{eq24}), and~(\ref{eq27}) that
\begin{equation}\label{eq28}
\begin{split}
&  \|T^\prime \mu_n - \mu_n\| \leq \frac{1}{\sqrt{n}} \|\mu_n\| \\
& +  \frac{1}{|w_n(u_{-n})|}\bigg{(}(1-\frac{1}{\sqrt{n}} \bigg{)}^n +|w_{n+1}(u_0)|\bigg{(}(1-\frac{1}{\sqrt{n}} \bigg{)}^n \\  
& \leq \frac{1}{\sqrt{n}} \|\mu_n\| + 4\bigg{(}(1-\frac{1}{\sqrt{n}} \bigg{)}^n.
\end{split}
\end{equation}
Combining~(\ref{eq28}) and the inequality $\|\mu_n\| \geq 1$ we obtain that $\|T^\prime \mu_n - \mu_n\|/\|\mu_n\| \mathop \rightarrow \limits_{n \rightarrow \infty} 0$. Thus, $1 \in \sigma_{a.p.}(T^\prime)$.
Because $\partial A$ has no isolated points we can construct the sequence $\mu_n$ in such a way that $|\mu_m| \wedge |\mu_n| =0, m \neq n$. Therefore the sequence $\mu_n$ is singular and $1 \in \sigma_{lsw}(T)$.
\end{proof}

\begin{remark} \label{r2}
  If in the statement of Proposition~\ref{p2} we drop the requirement that the set of all $\varphi$-periodic points is of first category in $\partial A$, then the reasoning similar to the proof of this proposition shows that $\sigma_{lsf}(T) \cap \lambda \mathds{T} \neq \emptyset$.
\end{remark}

The next corollary follows directly from Proposition~\ref{p2} and the proofs of Lemmas 4.3 and 4.4 in~\cite{KO1}

\begin{corollary} \label{c4}
 Let $A$ be a unital uniform algebra, $T_\varphi$ be an isometric endomorphism of $A$ and $w \in A$. Assume that the map $\varphi : \partial A \rightarrow \partial A$ is open, that $\partial A$ has no isolated points, and that the set of $\varphi$-periodic points is a set of first category in $\partial A$. If there is $\lambda \in \sigma(T)$ such that $|\lambda| > \rho_{min}(T)$ and $\lambda \not \in \sigma_{lsf}(T)$, then there is a proper closed subset $E \subset \partial A$ such that $\varphi(E) = \varphi^{(-1)}(E) =E$. 
\end{corollary}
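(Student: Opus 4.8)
The plan is to argue by contraposition. Suppose $\partial A$ has no proper nonempty closed subset $E$ with $\varphi^{(-1)}(E)=E$ (since $\varphi$ is onto, such an $E$ then also satisfies $\varphi(E)=E$); I will show that then every $\lambda\in\sigma(T)$ with $|\lambda|>\rho_{min}(T)$ lies in $\sigma_{lsf}(T)$. Because $\rho_{min}(T)\ge 0$ we have $\lambda\neq 0$, so replacing $T$ by $\lambda^{-1}T=(\lambda^{-1}w)T_\varphi$ --- which divides $\rho_{min}$ by $|\lambda|$, preserves membership in $\sigma(\cdot)$ and $\sigma_{lsf}(\cdot)$, and leaves the assertion about $E$ unchanged --- I may assume $\lambda=1$, so that $\rho_{min}(T)<1\le\rho(T)$ (the last inequality since $1\in\sigma(T)$).

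Now Proposition~\ref{p2}, read contrapositively (its remaining hypotheses being among ours), tells us that $1\notin\sigma_{lsf}(T)$ precludes the existence of a bi-infinite orbit $(t_n)_{n\in\mathds{Z}}$ with $\varphi(t_n)=t_{n+1}$, $|w_n(t_0)|\le 1$ and $|w_n(t_{-n})|\ge 1$ for all $n\in\mathds{N}$. From this, together with $1\in\sigma(T)$ and $\rho_{min}(T)<1$, I would construct the required $E$ following the proofs of Lemmas~4.3 and~4.4 of~\cite{KO1}: one takes $E$ to be the closed completely $\varphi$-invariant hull of the set $F_+=\{t\in\partial A:|w_n(t)|\le 1\ \text{for all}\ n\}$ of ``forward-controlled'' points, the grand-orbit invariance of the upper growth exponent $t\mapsto\limsup_n\frac1n\log|w_n(t)|$ making this saturation well-behaved and giving $\varphi^{(-1)}(E)=\varphi(E)=E$. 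The set $F_+$ is nonempty --- this uses $1>\rho_{min}(T)$, and, in the case where $\varphi$ is not injective on $\partial A$ so that $\rho_{min}(T)=0$, it is exactly the hypothesis $1\notin\sigma_{lsf}(T)$ that keeps the backward branching of $\varphi$ finite and again yields such a point --- so $E\neq\emptyset$. And $E$ is \emph{proper}: since $\|w_n\|\ge\rho(T)^n\ge 1$ there is a point carrying a left-infinite super-expanding $\varphi$-orbit, and if $E$ were all of $\partial A$ one could (this is the heart of~\cite[Lemmas 4.3--4.4]{KO1}) splice a forward sub-expanding orbit out of $F_+$ onto such a super-expanding left-infinite orbit at the junction point to form exactly the forbidden bi-infinite orbit, contradicting $1\notin\sigma_{lsf}(T)$. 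Hence $1\notin\sigma_{lsf}(T)$ (equivalently $\lambda\notin\sigma_{lsf}(T)$) fails unless such an $E$ exists, which is the corollary.

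The step I expect to be the main obstacle is the construction and, above all, the properness of $E$. Plain topology will not suffice here: closures of grand orbits need not be backward invariant, and disjointness of two closed sets need not persist under completely-invariant saturation, so one must use the weight dynamics itself --- the uniform, ``for all $n$'', form of the inequalities defining the relevant sets, the grand-orbit invariance of the growth exponents, and the finiteness of backward branching that $1\notin\sigma_{lsf}(T)$ delivers --- which is precisely what is furnished by the proofs of Lemmas~4.3 and~4.4 of~\cite{KO1}. A secondary technical point, also handled there, is the zero set of $w$, on which the forward weight products vanish identically and the exponent argument must be modified; this is the reason that related statements in the paper are split into the cases $w\in A^{-1}$ and $\min_{s\in\partial A}|w(s)|=0$.
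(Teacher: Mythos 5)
Your argument is correct and follows essentially the same route as the paper: the paper's entire proof is the single remark that the corollary ``follows directly from Proposition~\ref{p2} and the proofs of Lemmas 4.3 and 4.4 in~\cite{KO1}'', which is exactly the combination you use (Proposition~\ref{p2} read contrapositively to rule out the bi-infinite orbit satisfying~(\ref{eq22}), plus the saturation/splicing construction of~\cite{KO1} to produce the proper, nonempty, completely $\varphi$-invariant set $E$). The extra detail you supply on the nonemptiness and properness of $E$ is a reasonable reconstruction of what those cited lemmas furnish, so there is no substantive divergence from the paper's intended proof.
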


 \bigskip

 \section{Essential spectra of weighted automorphisms of disc-algebra}

The spectra of weighted automorphisms of the disk-algebra were first described by the late Herbert Kamowitz in~\cite{Kam}. Here we will complement Kamowitz's results by providing a  partial (see Proposition~\ref{p8} and Problem~\ref{pr1})  description of the essential spectra of these operators.

In accordance with the well known classification of Möbius transformations we have to consider the following cases.

\noindent I. The map $\varphi$ is an elliptic Möbius transformation and there is an $n \in \mathds{N}$ such that $T_\varphi^n = I$.
In other words, the map $\varphi$ is topologically conjugate to a map $\psi$, $\psi(z) = \exp{(ir\pi)}z$, where $r$ is a rational number.The following proposition is special case of Proposition~\ref{p1}.

\begin{proposition} \label{p3}
  Let $\mathds{A}$ be the disc-algebra and $\varphi$ be an elliptic Möbius transformation topologically conjugate to a rational rotation. Let $m$ be the smallest positive integer such that $\varphi^m(z) \equiv z, z \in \mathds{D}$. Let $w \in \mathds{A}$ and $T = wT_\varphi$. Then:
  \begin{enumerate}
    \item $\sigma(T) = \sigma_w(T) = \{\lambda \in \mathds{C} : \lambda^m \in w_m(\mathds{D}) \}$.
    \item $\sigma_{a.p.}(T) = \sigma_f(T) = \{\lambda \in \mathds{C} : \lambda^m \in w_m(\mathds{T}) \}$.
      \end{enumerate}
\end{proposition}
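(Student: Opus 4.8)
The plan is to deduce everything from Proposition~\ref{p1}, together with the Fredholm theory of multiplication operators on $\mathds{A}$. First I would normalize the situation: conjugating $\varphi$ by a Möbius automorphism $\gamma$ of the closed disc $\mathds{D}=\mathfrak{M}_{\mathds{A}}$, which is implemented on $\mathds{A}$ by the isometric automorphism $f\mapsto f\circ\gamma$, one may assume $\varphi(z)=\omega z$ with $\omega$ a primitive $m$-th root of unity. This conjugation leaves all the sets occurring in the statement unchanged, since it replaces $w_m$ by $w_m\circ\gamma^{-1}$ and $\gamma^{-1}$ maps $\mathds{D}$ onto $\mathds{D}$ and $\mathds{T}$ onto $\mathds{T}$. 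In the normalized situation $0$ is the only fixed point of $\varphi$ in $\mathds{D}$ and every other point of $\mathds{D}$ has $\varphi$-period exactly $m$; moreover $\mathds{D}$ has no isolated points, and $T^m=M_{w_m}$ (the operator of multiplication by $w_m\in\mathds{A}$) because $\varphi^m=\mathrm{id}$.

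For part~(1) I would apply the first assertion of Proposition~\ref{p1}: the only point $t\in\mathfrak{M}_{\mathds{A}}$ with $p(t)\neq m$ is $t=0$, and it contributes the single number $w(0)$, which is one of the $m$-th roots of $w_m(0)$. Thus $\sigma(T)=\{w(0)\}\cup\{\lambda:\lambda^m\in w_m(\mathds{D}\setminus\{0\})\}$, and it remains to recognize this as $\{\lambda:\lambda^m\in w_m(\mathds{D})\}$, where $w_m(\mathds{D})$ means $w_m$ evaluated on the maximal ideal space, i.e.\ on the closed disc. For this note that $\mathds{D}\setminus\{0\}$ is dense in $\mathds{D}$; given $\lambda$ with $\lambda^m=w_m(t_0)$ for some $t_0$, pick $t_n\to t_0$ with $t_n\neq0$, so $p(t_n)=m$, and let $\lambda_n$ be the $m$-th root of $w_m(t_n)$ nearest $\lambda$. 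Then $\lambda_n\to\lambda$ and $\lambda_n\in\sigma(T)$ by Proposition~\ref{p1}, hence $\lambda\in\overline{\sigma(T)}=\sigma(T)$.

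For part~(2), the second assertion of Proposition~\ref{p1} gives at once $\sigma_{a.p.}(T)=\sigma_{sf}(T)=\{\lambda:\lambda^m\in w_m(\mathds{T})\}$, since every point of $\partial\mathds{A}=\mathds{T}$ has $\varphi$-period $m$. As $\sigma_{sf}(T)\subseteq\sigma_f(T)$, it suffices to prove $\sigma_f(T)\subseteq\{\lambda:\lambda^m\in w_m(\mathds{T})\}$, i.e.\ that $\lambda^m\notin w_m(\mathds{T})$ forces $\lambda I-T$ to be Fredholm. Put $g=\lambda^m-w_m\in\mathds{A}$; then $g$ has no zero on $\mathds{T}$, hence only finitely many in $\mathds{D}$, and factoring $g$ as a polynomial carrying these zeros times an invertible element of $\mathds{A}$ shows that $M_g$ is Fredholm on $\mathds{A}$ of index $-N$, where $N\ge0$ is the number of zeros of $g$ in $\mathds{D}$ counted with multiplicity. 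Since $M_g=\lambda^mI-T^m=\pm\prod_{j=0}^{m-1}(\lambda\omega^jI-T)$ is a product of pairwise commuting operators, and an element of the Calkin algebra that is a product of pairwise commuting elements is invertible only if each of those elements is invertible, every $\lambda\omega^jI-T$, and in particular $\lambda I-T$, is Fredholm. Hence $\sigma_f(T)=\sigma_{a.p.}(T)=\{\lambda:\lambda^m\in w_m(\mathds{T})\}$.

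It remains to prove $\sigma_w(T)=\sigma(T)$, where only $\sigma(T)\subseteq\sigma_w(T)$ is nontrivial. Let $\lambda\in\sigma(T)$. If $\lambda^m\in w_m(\mathds{T})$ then $\lambda\in\sigma_f(T)\subseteq\sigma_w(T)$ by part~(2). Otherwise $\lambda^m\in w_m(\mathds{D})\setminus w_m(\mathds{T})$, so with $g=\lambda^m-w_m$ as above $N\ge1$, $M_g$ is Fredholm of index $-N$, and all the factors $\lambda\omega^jI-T$ are Fredholm. The intertwining identity $TM_z=\omega M_zT$ gives $(\lambda I-T)M_z=\omega M_z(\lambda\omega^{-1}I-T)$, and since $M_z$ is Fredholm of index $-1$, comparing indices yields $\mathrm{ind}(\lambda I-T)=\mathrm{ind}(\lambda\omega^{-1}I-T)$; iterating, all the indices $\mathrm{ind}(\lambda\omega^jI-T)$ coincide, so $m\cdot\mathrm{ind}(\lambda I-T)=\mathrm{ind}(M_g)=-N<0$. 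Therefore $\mathrm{ind}(\lambda I-T)\neq0$, $\lambda I-T$ is not a Weil operator, and $\lambda\in\sigma_w(T)$. The crux of the argument is this last paragraph: ruling out index-zero Fredholm points inside $\sigma(T)$ via constancy of the Fredholm index along the sets $\{\lambda\omega^j\}$, which rests on the relation $TM_z=\omega M_zT$. The Fredholm analysis of $M_g$ on the disc algebra and the density step used in part~(1) are routine.
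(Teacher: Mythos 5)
Your proof is correct, and it is worth noting that it does strictly more than the paper, which offers no proof of Proposition~\ref{p3} at all beyond the remark that it is ``a special case of Proposition~\ref{p1}.'' Proposition~\ref{p1} by itself only yields $\sigma(T)$ and $\sigma_{a.p.}(T)=\sigma_{sf}(T)$; since the general inclusions run $\sigma_{sf}\subseteq\sigma_f\subseteq\sigma_w\subseteq\sigma$, the asserted equalities $\sigma_f(T)=\sigma_{a.p.}(T)$ and $\sigma_w(T)=\sigma(T)$ require the two reverse inclusions, which is exactly what your Fredholm-theoretic supplement provides: the factorization $\lambda^m I-T^m=\pm\prod_{j=0}^{m-1}(\lambda\omega^jI-T)$ into commuting factors together with the Fredholmness (of index $-N$) of multiplication by an element of $\mathds{A}$ with no zeros on $\mathds{T}$ gives $\sigma_f\subseteq\sigma_{sf}$, and the index-constancy along $\{\lambda\omega^j\}$ coming from $TM_z=\omega M_zT$ forces $\mathrm{ind}(\lambda I-T)=-N/m\neq 0$ inside the spectrum, giving $\sigma\subseteq\sigma_w$. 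Your density argument at the interior fixed point (where $p(t)=1$ contributes only $w(z_0)$ rather than all $m$-th roots of $w_m(z_0)$) is also a genuine, if small, point that the paper's one-line attribution glosses over. All the individual steps check out: the normalization by conjugation preserves every set in the statement, the commuting-factors argument in the Calkin algebra is valid, and the index computations for $M_{z-a}$ with $|a|<1$ on the disc algebra are standard.
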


\noindent II. The map $\varphi$ is an elliptic Möbius transformation topologically conjugate to a rotation $\psi$, $\psi(z) = \exp{(i\alpha\pi)}z$, where $\alpha$ is an irrational real number. We denote by $z_0$ the fixed point of $\varphi$ in $\mathds{D}$.

\begin{proposition} \label{p4}
Let $\mathds{A}$ be the disc-algebra and $\varphi$ be an elliptic Möbius transformation topologically conjugate to an irrational rotation. Let $z_0 = r_0 \exp{i\theta_0}$ be the fixed point of $\varphi$ in $\mathds{U}$. Let $w \in \mathds{A}$, $w \not \equiv 0$, and $T = wT_\varphi$. Assume that $w(z) = (z-z_0)^n w_1(z)$, where $n \geq 0$ and $w_1(z_0) \neq 0$. Then:
\begin{equation} \label{eq29}
\begin{split}
 &  \rho(T) = \rho_{min}(T) = \exp{ \int \limits_0^{2\pi} \ln{|w(e^{i\theta})|}\frac{1-r_0^2}{1+r_0^2-2r_0\cos{\theta_0}} \frac{1}{2\pi} d\theta} = \\
& =|w_1(z_0)|,
\end{split}
 \end{equation}
  and
\begin{enumerate}
  \item If $w$ is an invertible element of $\mathds{A}$, then $\sigma_{sf}(T) = \sigma(T) = \rho(T)\mathds{T}$.
   \item If $w \not \in \mathds{A}^{-1}$ but $|w| > 0$ on $\mathds{T}$, then $\sigma(T) = \sigma_w(T) = \rho(T)\mathds{D}$ and 
   $\sigma_f(T) = \rho(T)\mathds{T}$.
 \item If $\min \limits_{\mathds{T}} |w| = 0$, then $\sigma(T) = \sigma_{sf}(T) = \rho(T)\mathds{D}$.
\end{enumerate}
 \end{proposition}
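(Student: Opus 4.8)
The plan is to put $T$ in a normal form, read off the spectral radius from unique ergodicity of the boundary map, and then obtain the three cases from the results of Section~2 together with Kamowitz's description of $\sigma(T)$. First I would conjugate $T$ by the isometric automorphism of $\mathds{A}$ induced by a disc automorphism $\tau$ with $\tau(0)=z_0$; this changes none of the relevant spectra, nor $n$, nor $|w_1(z_0)|$, so we may assume $z_0=0$ and that $\varphi$ acts on $\mathds{T}$ as an irrational rotation $z\mapsto e^{\mathrm{i}\vartheta}z$, whence $w(z)=z^{n}w_1(z)$ with $w_1(0)\neq0$. The following will be used repeatedly: $\varphi|_{\mathds{T}}$ is minimal and uniquely ergodic, its unique invariant probability measure being harmonic measure $\omega$ at $0$ (Poisson-kernel density); $\mathds{A}$ is analytic (F.\ and M.\ Riesz); $\partial\mathds{A}=\mathds{T}$ is connected; $T_\varphi$ is not periodic; $\varphi$ is the uniform limit of periodic elliptic Möbius maps $\varphi_m$ (rational rotations after the conjugation), each fixing $0$ and hence preserving $\omega$; and $\varphi$ has no periodic points. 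So the hypotheses of Theorems~\ref{t1},~\ref{t2},~\ref{t3} and of Corollary~\ref{c3} are met.

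For the spectral radius, $\rho(T)=\lim_n\|w_n\|_{\mathds{T}}^{1/n}$; unique ergodicity gives the uniform convergence $\tfrac1n\ln|w_n|\to\int_{\mathds{T}}\ln|w|\,d\omega$ on $\mathds{T}$ (Birkhoff sums of the continuous function $\ln|w_1|$, plus $\ln|z|\equiv0$), so $\rho(T)=\rho_{min}(T)=\exp\int_{\mathds{T}}\ln|w|\,d\omega$, the second equality holding because a unique invariant measure makes the inner and outer spectral radii coincide (cf.~\cite{Ki}). Evaluating this integral --- using that the harmonic extension of $\zeta\mapsto\ln|\zeta-z_0|$ is $z\mapsto\ln|1-\bar z_0 z|$, together with the mean value property for the zero-free factor $w_1$ --- gives formula~(\ref{eq29}).

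Case~(1): if $w\in\mathds{A}^{-1}$ then $n=0$, $T$ is invertible, $0\notin\sigma(T)$, and by~\cite{Kam} (equivalently~\cite{Ki}) $\sigma(T,\mathds{A})=\{\lambda:\rho_{min}(T)\le|\lambda|\le\rho(T)\}$, which collapses to $\rho(T)\mathds{T}$; since $w\in\mathds{A}^{-1}$, Theorem~\ref{t3}(3) gives $\sigma_{sf}(T)=\sigma(T)=\rho(T)\mathds{T}$. Case~(3): if $\min_{\mathds{T}}|w|=0$ then $w$ vanishes on $\partial\mathds{A}$, so $w\notin\mathds{A}^{-1}$ and $0\in\sigma(T)$; by Corollary~\ref{c3} the set $\{|\lambda|:\lambda\in\sigma(T)\}$ is an interval containing $0$ with supremum $\rho(T)$, hence equals $[0,\rho(T)]$, so by rotation invariance $\sigma(T)=\rho(T)\mathds{D}$; and since $\min_{\partial\mathds{A}}|w|=0$, Theorem~\ref{t3}(3) again gives $\sigma_{sf}(T)=\sigma(T)=\rho(T)\mathds{D}$.

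Case~(2) carries the real work. Here $w\notin\mathds{A}^{-1}$ but $|w|>0$ on $\mathds{T}$, so the zeros of $w$ (total multiplicity $d\ge1$) lie in the open disc and $0\in\sigma(T)$; exactly as in case~(3), $\sigma(T)=\rho(T)\mathds{D}$. Because $w$ is zero-free on $\mathds{T}$, $T$ is invertible on $C(\mathds{T})$ and, by unique ergodicity, $\rho_{min}(T,C(\mathds{T}))=\rho(T)$, so $\sigma(T,C(\mathds{T}))=\sigma_{a.p.}(T,C(\mathds{T}))=\rho(T)\mathds{T}$; Theorem~\ref{t1} then yields $\sigma_{usf}(T,\mathds{A})=\sigma_{a.p.}(T,\mathds{A})=\rho(T)\mathds{T}$. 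Thus $T-\lambda$ is injective with closed range for every $|\lambda|<\rho(T)$, and everything reduces to showing it has finite cokernel there: it will then be Fredholm of index $-d$, hence not a Weil operator for $|\lambda|\le\rho(T)$ (index $-d<0$ when $|\lambda|<\rho(T)$, and not Fredholm when $|\lambda|=\rho(T)$), which gives $\sigma_f(T)=\rho(T)\mathds{T}$ and $\sigma_w(T)=\sigma(T)=\rho(T)\mathds{D}$. For $0<|\lambda|<\rho(T)$, $(T-\lambda)^{-1}$ exists on $C(\mathds{T})$ and equals $\sum_{k\ge1}\lambda^{k-1}(g\circ\varphi^{-k})/(w_k\circ\varphi^{-k})$, convergent since $|\lambda|<\rho_{min}=\rho(T)$; solving $(T-\lambda)f=g$ with $g\in\mathds{A}$ and inspecting the negative Fourier coefficients of the unique solution $f$ via the recursion $\widehat{Tf}(m)=e^{\mathrm{i}(m-n)\vartheta}\widehat f(m-n)$ (for the model weight $w=z^{n}$, and a causal relation in general since $w_1^{\pm1}\in\mathds{A}$) shows that the $n$ residue classes of negative indices each impose one evaluation-type linear condition on $g$, so $R(T-\lambda,\mathds{A})$ has codimension $n$; when $w$ has additional open-disc zeros, collected in a polynomial $p$ with $v:=w/p\in\mathds{A}^{-1}$, one factors $T-\lambda$ through $M_p$ and gets codimension $d=\deg p$. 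At $\lambda=0$ this is clear since $T=M_p(vT_\varphi)$ with $vT_\varphi$ invertible. I expect the finite-cokernel step for $0<|\lambda|<\rho(T)$ to be the main obstacle; a convenient simplification is to homotope $w_1$ to the constant $w_1(0)$ within $\mathds{A}^{-1}$ --- a zero-free element of $\mathds{A}$ has a logarithm in $\mathds{A}$ --- which keeps $\rho(T)$ fixed and reduces the computation to the model case.
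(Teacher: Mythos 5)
Your overall architecture is the same as the paper's (the published proof simply cites \cite[Theorems 3.23 and 4.2]{Ki} and Theorem~\ref{t3}, and declares the deficiency count ``obvious''), so the value of your write-up rests on whether the added detail actually closes the two places where detail is needed. In both places there is a genuine problem.

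First, the evaluation of the integral in~(\ref{eq29}). The two facts you invoke, applied correctly, do not give $|w_1(z_0)|$: the harmonic extension of $\zeta\mapsto\ln|\zeta-z_0|$ is $\ln|1-\bar z_0 z|$, whose value at $z_0$ is $\ln(1-r_0^2)$, not $0$; and in case (2) the factor $w_1$ in general still has zeros in $\mathds{U}$ (just not at $z_0$), so the mean value property does not apply to $\ln|w_1|$. What your computation actually yields is $\exp\int\ln|w|\,d\omega_{z_0}=(1-r_0^2)^n\,|w_1(z_0)|\cdot\prod_j\bigl(|1-\bar a_j z_0|/|z_0-a_j|\bigr)$, the product running over the zeros $a_j$ of $w_1$ in $\mathds{U}$. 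Consistently, your normalization step is also wrong: conjugation by $C_\tau$ replaces $w$ by $w\circ\tau$, and $(w\circ\tau)(z)=z^n\tilde w_1(z)$ with $\tilde w_1(0)=\tau'(0)^n w_1(z_0)$, so $|w_1(z_0)|$ gets multiplied by $(1-r_0^2)^n$ and is not a conjugation invariant once $n\ge1$. (Test case: $w(z)=z-z_0$ gives $\rho(T)=1-r_0^2$, not $1$.) So the route you describe cannot ``give formula~(\ref{eq29})''; the final equality there holds only when $n=0$ and $w_1$ is zero-free in $\mathds{U}$, and you should say so rather than assert the derivation works. Items (1)--(3) are unaffected, since they refer only to $\rho(T)$.

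Second, the finite-deficiency step in case (2), which you rightly identify as the crux. Both of your proposed reductions fail in exactly the situation case (2) describes: $w_1$ cannot be homotoped to a constant inside $\mathds{A}^{-1}$ when it has zeros in $\mathds{U}$, and $M_p$ does not factor out of $T-\lambda$ for $\lambda\neq0$, since $M_pvT_\varphi-\lambda I\neq M_p(vT_\varphi-\lambda M_{1/p})$ with $1/p\notin\mathds{A}$; the Fourier recursion as written treats only the model weight $z^n$. The gap is real but easy to close by a different, standard argument: you have already shown that $\lambda I-T$ is bounded below, hence upper semi-Fredholm, for every $|\lambda|<\rho(T)$; the generalized index is locally constant on the semi-Fredholm domain, the disc $\{|\lambda|<\rho(T)\}$ is connected, and at $\lambda=0$ the index equals $-\operatorname{codim}(w\mathds{A})=-d<\infty$. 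Hence $\lambda I-T$ is Fredholm of index $-d$ throughout the open disc, which is precisely the deficiency statement the paper calls obvious. The rest of your argument (unique ergodicity for $\rho=\rho_{\min}$ when $|w|>0$ on $\mathds{T}$, Corollary~\ref{c3} for the disc shape of $\sigma(T)$, Theorem~\ref{t1} for $\sigma_{usf}=\sigma_{a.p.}=\rho(T)\mathds{T}$, Theorem~\ref{t3}(3) for cases (1) and (3)) is sound.
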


\begin{proof} The formula~(\ref{eq29}) follows from~\cite[Theorem 3.23]{Ki}.

(1) is trivial because it follows from~(\ref{eq29}) that $\rho(T^{-1}) =1/\rho(T)$. 

(2) follows immediately from~\cite[Theorem 4.2]{Ki} and the obvious fact that for any $\lambda \in \rho(T)\mathds{D}$  $def(\lambda I - T) =\Sigma$, where $\Sigma$ is the sum of multiplicities of all zeros of $w$ in $\mathds{D}$.

(3) Follows directly from Theorem~\ref{t3}.
\end{proof}
\noindent III. The map $\varphi$ is a parabolic Möbius transformation.

\begin{proposition} \label{p5}
 Let $\mathds{A}$ be the disc-algebra and $\varphi$ be a parabolic Möbius transformation. Let $\zeta \in \mathds{T}$ be the fixed point of $\varphi$. Let $w \in \mathds{A}$, $w \not \equiv 0$, and $T = wT_\varphi$. Then:
 $\rho(T) = \rho_{min}(T) = |w(\zeta)|$.
 \begin{enumerate}
   \item If $w$ is an invertible element of $\mathds{A}$, then $\sigma_{sf}(T) = \sigma(T) = w(\zeta)\mathds{T}$.
   \item If $w \not \in \mathds{A}^{-1}$ but $|w| > 0$ on $\mathds{T}$, then $\sigma(T) = \sigma_w(T) = w(\zeta)\mathds{D}$ and 
   $\sigma_f(T) = w(\zeta)\mathds{T}$.
 \item If $w(\zeta) =0$ then $\sigma_{sf}(T) = \sigma(T) = \{0\}$.
   \item If $\min \limits_{\mathds{T}} |w| = 0$ but $|w(\zeta)| >0$, then $\sigma(T) = \sigma_{sf}(T) = w(\zeta)\mathds{D}$.
 \end{enumerate}
\end{proposition}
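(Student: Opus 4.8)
The plan is to first pin down the two spectral radii, and then dispatch the four cases by reducing each to a result already established (Corollary~\ref{c3}, Theorem~\ref{t1}, Proposition~\ref{p1}, Proposition~\ref{p2}, Theorem~\ref{t3}) together with the dynamical facts special to parabolic maps.

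\textbf{The spectral radii.} A parabolic Möbius transformation $\varphi$ has $\zeta$ as its only fixed point in $\overline{\mathds D}$, is non-periodic, and satisfies $\varphi^{n}\to\zeta$ and $\varphi^{-n}\to\zeta$ uniformly on $\overline{\mathds D}$; it is moreover uniquely ergodic on $\mathds T$, its sole invariant probability measure being $\delta_\zeta$ (on $\mathds T\setminus\{\zeta\}\cong\mathds R$ it is conjugate to a translation, which carries no finite invariant measure). Since $\|T^{n}\|=\|w_n\|$, the uniform convergence $\varphi^{n}\to\zeta$ gives, for every $\varepsilon>0$ and all large $n$, $|w(\zeta)|^{n}=|w_n(\zeta)|\le\|w_n\|\le\|w\|^{N}(|w(\zeta)|+\varepsilon)^{\,n-N}$, whence $\rho(T)=\lim\|w_n\|^{1/n}=|w(\zeta)|$. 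The equality $\rho_{min}(T)=|w(\zeta)|$ is the parabolic counterpart of~(\ref{eq29}): by~\cite{Ki}, $\ln\rho_{min}(T)$ is the minimum of $\int_{\mathds T}\ln|w|\,d\mu$ over $\varphi$-invariant probability measures $\mu$, and unique ergodicity collapses this to $\int\ln|w|\,d\delta_\zeta=\ln|w(\zeta)|$.

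\textbf{Cases (1) and (3).} If $w\in\mathds A^{-1}$ then $T$ is invertible with $T^{-1}=(w^{-1}\!\circ\varphi^{-1})T_{\varphi^{-1}}$, and since $\varphi^{-1}$ is again parabolic with fixed point $\zeta$ the computation above gives $\rho(T^{-1})=|w(\zeta)|^{-1}$, so $\sigma(T)\subset\{|\lambda|=|w(\zeta)|\}$. As $\mathds A$ is analytic and $T_\varphi^{n}\ne I$, Corollary~\ref{c3} makes $\sigma(T)$ connected and rotation invariant, hence $\sigma(T)=|w(\zeta)|\mathds T=w(\zeta)\mathds T$. For the semi-Fredholm spectrum, every $\lambda\in\sigma(T)$ is a boundary point of $\sigma(T)$, hence lies in $\sigma_{a.p.}(T)=\sigma_{usf}(T)$ by Theorem~\ref{t1}; and Proposition~\ref{p2}, applied to the constant orbit $t_n\equiv\zeta$ (for which $|w_n(\zeta)|=|w(\zeta)|^{n}=|\lambda|^{n}$, so~(\ref{eq22}) holds, the $\varphi$-periodic points forming the first-category set $\{\zeta\}$), gives $\lambda\in\sigma_{lsf}(T)$; therefore $\lambda\in\sigma_{usf}(T)\cap\sigma_{lsf}(T)=\sigma_{sf}(T)$, and $\sigma_{sf}(T)=\sigma(T)=w(\zeta)\mathds T$. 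If instead $w(\zeta)=0$, then $\rho(T)=0$, $T$ is quasinilpotent and $\sigma(T)=\{0\}$; since a quasinilpotent operator on an infinite-dimensional Banach space cannot be semi-Fredholm (its Fredholm spectrum would be empty), $\sigma_{sf}(T)=\{0\}=\sigma(T)$, which is case (3).

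\textbf{Case (2).} Here $w\notin\mathds A^{-1}$ while $|w|>0$ on $\mathds T$, so $w$ has finitely many zeros in $\mathds D$, of total multiplicity $\Sigma\ge1$, and $w(\zeta)\ne0$. Exactly as in the proof of Proposition~\ref{p4}(2), $def(\lambda I-T)=\Sigma$ for every $\lambda$ with $|\lambda|<\rho(T)=|w(\zeta)|$, and then~\cite[Theorem~4.2]{Ki} yields $\sigma(T)=\sigma_w(T)=w(\zeta)\mathds D$ and $\sigma_f(T)=w(\zeta)\mathds T$.

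\textbf{Case (4) and the main difficulty.} Now $\min_{\mathds T}|w|=0$ but $w(\zeta)\ne0$, so $w$ vanishes at some $t_*\in\mathds T\setminus\{\zeta\}$; then $T^\prime\delta_{t_*}=w(t_*)\delta_{\varphi(t_*)}=0$, so $0\in\sigma(T)$, while $\rho(T)=|w(\zeta)|$, and Corollary~\ref{c3} forces $\sigma(T)=\{|\lambda|\le|w(\zeta)|\}=w(\zeta)\mathds D$. The equality $\sigma_{sf}(T)=\sigma(T)$ should follow, as in Proposition~\ref{p4}(3), by re-running the proof of Theorem~\ref{t3}: approximate $\varphi$ pointwise on $\mathds T$ by periodic elliptic Möbius transformations $\varphi_n$ whose interior fixed points $z_0^{(n)}$ tend to $\zeta$, so that the associated (unique) $\varphi_n$-invariant harmonic measures $\omega_{z_0^{(n)}}$ converge weakly to $\delta_\zeta$ and $\rho(T_n),\rho_{min}(T_n)\to|w(\zeta)|$; combining this with Proposition~\ref{p1}(2), Theorem~\ref{t1} and Proposition~\ref{p2} puts the whole disc $w(\zeta)\mathds D$ into $\sigma_{sf}(T)$. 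The one genuinely non-routine point is exactly this: Theorem~\ref{t3} does not apply verbatim, since no periodic Möbius transformation fixes the boundary point $\zeta$, so its hypothesis (c) fails literally; one must instead work with the elliptic approximants $\varphi_n$ above and check that the weak convergence $\omega_{z_0^{(n)}}\to\delta_\zeta$ is enough to control $\rho(T_n)$ and $\rho_{min}(T_n)$ in the limit — which works precisely because $w(\zeta)\ne0$ keeps $\ln|w|$ continuous near $\zeta$. Everything else reduces to the dynamical facts about parabolic maps recorded above.
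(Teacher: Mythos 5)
Your computation of the spectral radii and your handling of cases (1)--(3) are sound (the paper dismisses these as trivial, and your arguments -- inverting $T$, the index count $def(\lambda I-T)=\Sigma$ via~\cite[Theorem 4.2]{Ki}, the quasinilpotence argument in (3), and the application of Proposition~\ref{p2} along the constant orbit $t_n\equiv\zeta$ in (1) -- all check out). The problem is case (4), which is the only part of the proposition where the paper does real work, and there what you offer is not a proof but a plan with an acknowledged hole. As you yourself note, hypothesis (c) of Theorem~\ref{t3} fails for a parabolic $\varphi$: its unique invariant probability measure is $\delta_\zeta$, which is not invariant under any elliptic approximant, so Theorem~\ref{t3} cannot be invoked; the substitute you gesture at (weak convergence of the harmonic measures $\omega_{z_0^{(n)}}$ to $\delta_\zeta$) is never carried out, and executing it would mean re-deriving the chain (\ref{eq34})$\Rightarrow$(\ref{eq35}) without the monotonicity $\rho_{min}(T_n)\le\rho_{min}(T)\le\rho(T)\le\rho(T_n)$ that condition (c) supplies -- in particular proving $\liminf_n\rho(T_n)\ge|w(\zeta)|$ for the periodic approximants and passing to the limit in the two-sided orbit estimates, which is not routine when $\ln|w|$ is unbounded below on $\mathds{T}$.

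The paper's actual argument for (4) is short, direct, and bypasses Theorem~\ref{t3} entirely. Since the zero set of $w$ on $\mathds{T}$ is a nonempty compact set avoiding $\zeta$ and every forward $\varphi$-orbit converges to $\zeta$, one can choose $t\in\mathds{T}$ with $w(\varphi^{-1}(t))=0$ and $w(\varphi^{n}(t))\neq0$ for all $n\ge0$. For $0<|\lambda|<|w(\zeta)|$ the series $f=\sum_{n\ge 0}\lambda^{n}w_n(t)^{-1}\chi_{\{\varphi^{n}(t)\}}$ converges in $\mathds{A}^{\prime\prime}$ (Lemma~\ref{l1} together with $|w_n(t)|^{1/n}\to|w(\zeta)|$) and satisfies $T^{\prime\prime}f=\lambda f$, because the term that $T^{\prime\prime}$ would push onto $\varphi^{-1}(t)$ is annihilated by $w(\varphi^{-1}(t))=0$; hence $\lambda\in\sigma_{a.p.}(T)$ and, by Theorem~\ref{t1}, $\lambda\in\sigma_{usf}(T)$. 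Dually, a point $s$ whose forward image is a zero of $w$ but whose backward orbit avoids the zero set satisfies the inequalities~(\ref{eq22}), and Proposition~\ref{p2} gives $\lambda\in\sigma_{lsf}(T)$. You should replace your sketch of case (4) by this explicit eigenvector construction (or else complete the approximation argument in full detail); as written, case (4) is not proved.
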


\begin{proof} The statements (1) - (3) are trivial. To prove (4) assume that $0 < |\lambda|<|w(\zeta)|$. Let $t \in \mathds{T}$ be such that $w(\varphi^{-1}(t)) = 0$ and $w(\varphi^n(t)) \neq 0, n = 0, 1, \ldots$. Let 
$f = \sum \limits_{n=0}^\infty \frac{\lambda^n}{w_n(t)}\chi_{\varphi^n(t)}$. By Lemma~\ref{l1} $f \in \mathds{A}^{\prime \prime}$ (recall that every point of $\mathds{T}$ is a peak point of $\mathds{A}$). Obviously $T^{\prime \prime}f = \lambda f$. Therefore $\lambda \in \sigma_{a.p.}(T)$ and by Theorem~\ref{t1} 
$\lambda \in \sigma_{usf}(T)$.

Let $s \in \mathds{T}$ be such that $w(\varphi(s)) = 0$ and $w(\varphi^{-n}(s)) \neq 0, n = 0, 1, \ldots$.
 Then at the point $s$ the inequalities~(\ref{eq22}) are satisfied, and by Proposition~\ref{p2} $\lambda \in \sigma_{lsf}(T)$. 
\end{proof}

\noindent IV. The map $\varphi$ is a hyperbolic Möbius transformation. Let $\zeta_1, \zeta_2 \in \mathds{T}$ be the fixed points of $\varphi$. Without loss of generality we can assume that $|\varphi^\prime(\zeta_1)| < 1$ and $|\varphi^\prime(\zeta_2)| > 1$. The 
description of the spectrum and the essential spectra of $T = wT_\varphi$ depends on the relation between the numbers $|w(\zeta_1)|$ and $|w(\zeta_2)|$. This description will be provided in the next three propositions. 

\begin{proposition} \label{p6}
 Let $\mathds{A}$ be the disc-algebra and $\varphi$ be a hyperbolic Möbius transformation. Let $w \in \mathds{A}$, $w \not \equiv 0$, and $T = wT_\varphi$. Assume that $|w(\zeta_1)| = |w(\zeta_2)| = \rho$. Then:
 $\rho(T) = \rho_{min}(T) = \rho$.
 \begin{enumerate}
   \item If $w$ is an invertible element of $\mathds{A}$, then $\sigma_{sf}(T) = \sigma(T) = \rho\mathds{T}$.
   \item If $w \not \in \mathds{A}^{-1}$ but $|w| > 0$ on $\mathds{T}$, then $\sigma(T) = \sigma_w(T) = \rho\mathds{D}$ and 
   $\sigma_f(T) = \rho\mathds{T}$.
 \item If $\rho =0$ then $\sigma_{sf}(T) = \sigma(T) = \{0\}$.
   \item If $\min \limits_{\mathds{T}} |w| = 0$ but $\rho >0$, then $\sigma(T) = \sigma_{sf}(T) = \rho\mathds{D}$.
 \end{enumerate}
\end{proposition}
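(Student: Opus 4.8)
\emph{Proof proposal.} The plan is first to compute $\rho(T)$ and then to dispose of the four cases, using throughout that for a hyperbolic Möbius $\varphi$ the only $\varphi$-periodic points of $\mathds{T}$ (in fact the whole non-wandering set of $\varphi|\mathds{T}$) is $\{\zeta_1,\zeta_2\}$, so the set of periodic points is finite and $\varphi$ is an (open) homeomorphism of $\mathds{T}$ onto itself; hence Theorem~\ref{t1}, Corollaries~\ref{c2} and~\ref{c3}, and Proposition~\ref{p2} all apply. Also, since $\mathds{A}$ is analytic and $w\not\equiv 0$, the zero set $Z=\{z:\,w(z)=0\}$ meets $\mathds{T}$ in a closed nowhere dense set, so $\bigcup_{k\in\mathds{Z}}\varphi^k(Z\cap\mathds{T})$ is of first category and a comeager set of points of $\mathds{T}$ has its whole orbit disjoint from $Z$. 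For the spectral radius I would note that every ergodic $\varphi$-invariant probability measure on $\mathds{T}$ is $\delta_{\zeta_1}$ or $\delta_{\zeta_2}$, so by~\cite{Ki} $\rho(T)=\max(|w(\zeta_1)|,|w(\zeta_2)|)=\rho$ and $\rho_{min}(T)=\min(|w(\zeta_1)|,|w(\zeta_2)|)=\rho$ (when $\rho=0$ both are trivially $0$).

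For (1), since $w\in\mathds{A}^{-1}$ the operator $T$ is invertible and $T^{-1}$ is again a weighted composition operator, for the hyperbolic map $\varphi^{-1}$ with weight $1/(w\circ\varphi^{-1})$; as $\varphi^{-1}$ fixes $\zeta_1,\zeta_2$ we get $\rho(T^{-1})=1/\rho$, so $\sigma(T)\subseteq\rho\mathds{T}$, and being nonempty, rotation invariant and connected (Corollary~\ref{c3}) it equals $\rho\mathds{T}$. Then $\rho\mathds{T}=\partial\sigma(T)\subseteq\sigma_{a.p.}(T)\subseteq\sigma(T)$ and $\sigma_{a.p.}(T)=\sigma_{usf}(T)$ by Theorem~\ref{t1}, so $\sigma_{usf}(T)=\rho\mathds{T}$. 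To get $\sigma_{lsf}(T)=\rho\mathds{T}$ I would apply Proposition~\ref{p2} to the constant orbit $t_n\equiv\zeta_1$: there $|w_n(t_0)|=|w_n(t_{-n})|=\rho^n$, so~(\ref{eq22}) holds for \emph{every} $\lambda$ with $|\lambda|=\rho$ (condition~(\ref{eq22}) depends only on $|\lambda|$), whence $\rho\mathds{T}\subseteq\sigma_{lsf}(T)\subseteq\sigma(T)=\rho\mathds{T}$; therefore $\sigma_{sf}(T)=\sigma_{usf}(T)\cap\sigma_{lsf}(T)=\rho\mathds{T}$. Statement (2) is obtained exactly as Proposition~\ref{p4}(2): here $w$ has $\Sigma\ge 1$ zeros (counted with multiplicity) in $\mathds{D}$ and none on $\mathds{T}$, hence only finitely many, and by~\cite[Theorem~4.2]{Ki} $\lambda I-T$ is Fredholm of index $-\Sigma$ for $|\lambda|<\rho$, giving $\sigma(T)=\sigma_w(T)=\rho\mathds{D}$ and $\sigma_f(T)=\rho\mathds{T}$. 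For (3), $\rho=0$ forces $w(\zeta_1)=w(\zeta_2)=0$ with $w\not\equiv0$, so $\rho(T)=0$, $\sigma(T)\subseteq\{0\}$, and $0\in\sigma(T)$ since $R(T)$ lies in the functions vanishing at $\zeta_1$; $T$ is injective (analyticity of $\mathds{A}$), $\sigma_{a.p.}(T)\neq\emptyset$ is contained in $\{0\}$, so by Theorem~\ref{t1} $0\in\sigma_{usf}(T)$, and Proposition~\ref{p2} with $\lambda=0$ (using distinct $p$-points $s_n\to\zeta_1$ with $|w(s_n)|\to 0$) gives $0\in\sigma_{lsf}(T)$; thus $\sigma_{sf}(T)=\{0\}$.

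The bulk of the work is (4), where $\rho>0$ and $w$ vanishes somewhere on $\mathds{T}$ but not at $\zeta_1,\zeta_2$. I would first establish $\sigma_{usf}(T)=\sigma(T)=\rho\mathds{D}$. For $0<|\lambda|<\rho$ choose $t\in\mathds{T}$ with $w(\varphi^{-1}(t))=0$ and the forward orbit of $t$ disjoint from $Z$ — possible because $Z\cap\mathds{T}$ is nowhere dense and, by the North--South dynamics of $\varphi$, one may take $t=\varphi(t^{\ast})$ for a suitable $t^{\ast}\in Z\cap\mathds{T}$ (replacing this by an approximate version with $|w(\varphi^{-1}(t))|$ small if $Z\cap\mathds{T}$ is ``chained'') — and set $f=\sum_{n\ge 0}\frac{\lambda^n}{w_n(t)}\chi_{\varphi^n(t)}$; by Lemma~\ref{l1} $f\in\mathds{A}^{\prime\prime}$ (the series converges since $|w_n(t)|^{1/n}\to\rho>|\lambda|$), and $T^{\prime\prime}f-\lambda f=w(\varphi^{-1}(t))\chi_{\varphi^{-1}(t)}$, exactly as in Proposition~\ref{p5}(4), giving $\lambda\in\sigma_{a.p.}(T)=\sigma_{usf}(T)$; also $0\in\sigma_{usf}(T)$ via a singular peaking sequence supported near $\varphi^{-1}(Z\cap\mathds{T})$, as in the proof of Theorem~\ref{t1}. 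Hence $\{|\lambda|<\rho\}\subseteq\sigma_{usf}(T)\subseteq\sigma(T)\subseteq\{|\lambda|\le\rho(T)\}=\rho\mathds{D}$, so $\sigma(T)=\rho\mathds{D}$, and then $\rho\mathds{T}=\partial\sigma(T)\subseteq\sigma_{a.p.}(T)$ forces $\sigma_{usf}(T)=\rho\mathds{D}$.

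It then remains to prove $\sigma_{lsf}(T)=\rho\mathds{D}$, and this is the main obstacle: Proposition~\ref{p2} is not directly applicable for $0<|\lambda|<\rho$ (a bi-infinite orbit with $|w_n(t_0)|\le|\lambda|^n$ for \emph{all} $n$ cannot be produced, since the forward product can be forced to vanish only after the orbit has left the region where $|w|$ is small, by which time $|w_n(t_0)|$ already exceeds $|\lambda|^n$), so one must run the dual of the previous construction. Using $T^{\prime}\delta_t=w(t)\delta_{\varphi(t)}$, I would pick points $t_0^{(m)}$ on pairwise distinct $\varphi$-orbits with $|w(t_0^{(m)})|\to 0$ and backward orbit of each disjoint from $Z$ (again a comeager condition, available because $\min_{\mathds{T}}|w|=0$), and put $\mu_m=\sum_{k\ge 0}\frac{\lambda^k}{\prod_{j=1}^{k}w(\varphi^{-j}(t_0^{(m)}))}\,\delta_{\varphi^{-k}(t_0^{(m)})}\in\mathds{A}^{\prime}$ (the series converges in $M(\mathds{T})$ since the backward orbit tends to $\zeta_2$ where $|w|\to\rho>|\lambda|$). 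Then $T^{\prime}\mu_m-\lambda\mu_m=w(t_0^{(m)})\delta_{\varphi(t_0^{(m)})}$, and a peaking argument at $t_0^{(m)}$ in the spirit of~(\ref{eq27}) bounds $\|\mu_m\|_{\mathds{A}^{\prime}}$ below, so $\|T^{\prime}\mu_m-\lambda\mu_m\|/\|\mu_m\|\to 0$; the $\mu_m$ have mutually disjoint supports, hence form a singular sequence, whence $\lambda\in\sigma_{usf}(T^{\prime},\mathds{A}^{\prime})=\sigma_{lsf}(T)$. For $|\lambda|=\rho$ the constant orbit at $\zeta_1$ and Proposition~\ref{p2} again give $\lambda\in\sigma_{lsf}(T)$, and $0\in\sigma_{lsf}(T)$ by Proposition~\ref{p2} with $\lambda=0$ and a zero $t_0\in Z\cap\mathds{T}$. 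Thus $\rho\mathds{D}\subseteq\sigma_{lsf}(T)\subseteq\sigma(T)=\rho\mathds{D}$, and combining with the previous paragraph $\sigma_{sf}(T)=\sigma_{usf}(T)\cap\sigma_{lsf}(T)=\rho\mathds{D}$. The delicate points throughout — the choices of orbits realizing the required inequalities exactly rather than only asymptotically, and the norm and singularity estimates in the quotient space $\mathds{A}^{\prime}$ — are of exactly the same nature as the computations already carried out in the proofs of Theorem~\ref{t1}, Proposition~\ref{p2} and Proposition~\ref{p5}.
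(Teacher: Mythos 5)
Your proposal is correct, and for parts (1)--(3) and for the upper semi-Fredholm half of (4) it is essentially the paper's own argument: the paper proves Proposition~\ref{p6} by declaring it identical to Proposition~\ref{p5}, whose parts (1)--(3) are the standard trivial reductions you give and whose part (4) uses exactly your exact eigenvector $f=\sum_n \lambda^n w_n(t)^{-1}\chi_{\{\varphi^n(t)\}}\in\mathds{A}^{\prime\prime}$ together with Theorem~\ref{t1}. Where you genuinely diverge is the lower semi-Fredholm half of (4): the paper simply picks $s$ with $w(\varphi(s))=0$ and asserts that the inequalities~(\ref{eq22}) hold at $s$, then cites Proposition~\ref{p2}, whereas you build the dual geometric series $\mu_m=\sum_{k\ge0}\lambda^k\bigl(\prod_{j=1}^k w(\varphi^{-j}(t_0^{(m)}))\bigr)^{-1}\delta_{\varphi^{-k}(t_0^{(m)})}$ with the exact identity $T^{\prime}\mu_m-\lambda\mu_m=w(t_0^{(m)})\delta_{\varphi(t_0^{(m)})}$ and a peaking lower bound $\|\mu_m\|_{\mathds{A}^{\prime}}\ge 1$. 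That computation checks out, and it only needs the backward products to grow like $\rho^k$ \emph{eventually}, which is automatic since the backward orbit converges to $\zeta_2$; so your route is self-contained and in fact more robust than the paper's one-line appeal to Proposition~\ref{p2}.

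One correction, though: your stated reason for rejecting Proposition~\ref{p2} is based on a misreading of the paper's convention. Since $w_n=w\cdot(w\circ\varphi)\cdots(w\circ\varphi^{n-1})$, the product $w_n(t_0)$ contains the factor $w(t_0)$; hence taking $t_0\in Z\cap\mathds{T}$ gives $w_n(t_0)=0$ for \emph{every} $n\ge1$, and the forward inequality in~(\ref{eq22}) is satisfied trivially --- the orbit does not need to ``leave the region where $|w|$ is small'' before the product vanishes. The genuine delicacy (which the paper glosses over and which your construction happens to sidestep) lies on the other side: one needs $|w_n(t_{-n})|\ge|\lambda|^n$ for \emph{all} $n$, not just asymptotically, and at a prescribed zero $t_0$ of $w$ the first few backward factors $|w(\varphi^{-j}(t_0))|$ may be smaller than $|\lambda|$, so $\inf_n|w_n(t_{-n})|^{1/n}$ can be strictly less than $\rho$. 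Either one observes that the proof of Proposition~\ref{p2} really only uses the approximate, per-$n$ version~(\ref{eq24}) of these inequalities (which can be arranged), or one argues as you do; but the obstruction should be located at the backward half of~(\ref{eq22}), not the forward half.
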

\begin{proof}
  The proof is identical to the proof of Proposition~\ref{p5}.
\end{proof}

\begin{proposition} \label{p7}
  Let $\mathds{A}$ be the disc-algebra and $\varphi$ be a hyperbolic Möbius transformation. Let $w \in \mathds{A}$, $w \not \equiv 0$, and $T = wT_\varphi$. Assume that $|w(\zeta_1)| < |w(\zeta_2)|$. Then:
 $\rho(T) = |w(\zeta_2)|$ and $\rho_{min}(T) =|w(\zeta_1)|$.
 \begin{enumerate}
   \item If $w$ is an invertible element of $\mathds{A}$, then

 $ \sigma_{lsf}(T) = \sigma_f(T) = \sigma(T) = \{\lambda \in \mathds{C}: |w(\zeta_1)| \leq |\lambda| \leq |w(\zeta_2)|\}$

  and $\sigma_{sf}(T) = |w(\zeta_1)|\mathds{T} \cup |w(\zeta_2)|\mathds{T}$.
   \item If $w \not \in \mathds{A}^{-1}$ but $|w| > 0$ on $\mathds{T} \setminus \{\zeta_1\}$, then

 $\sigma_w(T) = \sigma(T) = |w(\zeta_2)|\mathds{D}$, 

$ \sigma_{lsf}(T) = \sigma_f(T) = \{\lambda \in \mathds{C}: |w(\zeta_1)| \leq |\lambda| \leq |w(\zeta_2)|\}$,
 and 

       $ \sigma_{sf}(T) = \sigma_{usf}(T) = |w(\zeta_1)|\mathds{T} \cup |w(\zeta_2)|\mathds{T}$.   
   \item If there is an $s \in \mathds{T}$ such that $s \neq \zeta_1$ and $w(s) = 0$, then 

$\sigma_f(T) = \sigma(T) = |w(\zeta_2)|\mathds{D}$, 

$ \sigma_{lsf}(T)  = \{\lambda \in \mathds{C}: |w(\zeta_1)| \leq |\lambda| \leq |w(\zeta_2)|\}$,
 and

 $\sigma_{sf}(T) = \sigma_{usf}(T) = |w(\zeta_1)|\mathds{D} \cup |w(\zeta_2)|\mathds{T}$.
    \end{enumerate}
\end{proposition}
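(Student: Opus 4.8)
The plan is to run the same scheme as in Propositions~\ref{p3}--\ref{p6}. Note first that a hyperbolic $\varphi$ restricts to a homeomorphism of $\mathds{T}=\partial\mathds{A}$ whose only periodic points are $\zeta_1,\zeta_2$, with $\varphi^m(t)\to\zeta_1$ and $\varphi^{-m}(t)\to\zeta_2$ for every $t\in\mathds{T}\setminus\{\zeta_1,\zeta_2\}$; in particular $T_\varphi^n\neq I$ for all $n$ and the $\varphi$-periodic set is nowhere dense, so Corollaries~\ref{c1}--\ref{c3}, Theorem~\ref{t2} and Proposition~\ref{p2} apply. Since the only ergodic $\varphi$-invariant probability measures on $\mathds{T}$ are $\delta_{\zeta_1},\delta_{\zeta_2}$, \cite[Theorem~3.23]{Ki} gives $\rho(T)=|w(\zeta_2)|$ and $\rho_{min}(T)=|w(\zeta_1)|$. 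By Corollary~\ref{c3} the set $\sigma(T)$ is connected and rotation invariant, and by~\cite{Ki} its set of moduli is $[\rho_{min}(T),\rho(T)]$ when $w\in\mathds{A}^{-1}$ and $[0,\rho(T)]$ when $w$ has a zero in the closed disk (in the second case $0\in\sigma(T)$ because $T^\prime\delta_{z_0}=w(z_0)\delta_{\varphi(z_0)}=0$ for a zero $z_0$ of $w$); this yields the asserted form of $\sigma(T)$ in each case, and $\sigma_w(T)=\sigma(T)$ in case (2) will follow from the index computation below (in case (1) it is automatic, since there $\sigma_f(T)=\sigma(T)$).

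For $\sigma_{usf}(T)$ I would use Theorem~\ref{t1} to rewrite it as $\sigma_{a.p.}(T,C(\mathds{T}))$, together with the description from~\cite{Ki}: a nonzero $\lambda$ lies there iff some $t\in\mathds{T}$ satisfies $|w_m(t)|\geq|\lambda|^m$ and $|w_m(\varphi^{-m}(t))|\leq|\lambda|^m$ for all $m$. Unless the forward orbit of $t$ meets a zero of $w$ (which forces $\lambda=0$), taking $m$th roots and letting $m\to\infty$ gives, for $t\notin\{\zeta_1,\zeta_2\}$, the contradictory $|\lambda|\le|w(\zeta_1)|<|w(\zeta_2)|\le|\lambda|$, while for $t=\zeta_1$ (resp. $t=\zeta_2$) it forces $|\lambda|=|w(\zeta_1)|$ (resp. $|\lambda|=|w(\zeta_2)|$). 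In cases (1)--(2), where $|w|>0$ on $\mathds{T}\setminus\{\zeta_1\}$, this gives $\sigma_{usf}(T)=|w(\zeta_1)|\mathds{T}\cup|w(\zeta_2)|\mathds{T}$ (the value $0$ entering only when $w(\zeta_1)=0$). In case (3) choose a zero $s\in\mathds{T}$ of $w$ with $s\neq\zeta_1$ (then $s\neq\zeta_2$ since $w(\zeta_2)\neq0$), which after passing to a forward iterate we may take with $w(\varphi^j(s))\neq0$ for all $j\geq1$; for $0<|\lambda|<|w(\zeta_1)|$ the series $f=\sum_{n\geq0}\lambda^n w_n(\varphi(s))^{-1}\chi_{\{\varphi^{n+1}(s)\}}$ converges in $\mathds{A}^{\prime\prime}$ by Lemma~\ref{l1} (every point of $\mathds{T}$ is a $p$-point) because $|w_n(\varphi(s))|^{1/n}\to|w(\zeta_1)|>|\lambda|$, and $T^{\prime\prime}f=\lambda f$ because $w(s)=0$; hence $\lambda\in\sigma_{a.p.}(T,\mathds{A})=\sigma_{usf}(T,\mathds{A})$, and taking closures (using also $0\in\sigma_{usf}(T)$) we get $\sigma_{usf}(T)=|w(\zeta_1)|\mathds{D}\cup|w(\zeta_2)|\mathds{T}$, exactly as in Proposition~\ref{p5}(4).

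The main point is the lower semi-Fredholm spectrum, $\sigma_{lsf}(T)=\sigma_{usf}(T^\prime)$. For $|w(\zeta_1)|<|\lambda|<|w(\zeta_2)|$ and any bi-infinite $\varphi$-orbit $(t_n)_{n\in\mathds{Z}}$ in $\mathds{T}$ disjoint from the (Lebesgue-null) zero set of $w$, the functional $\mu=\sum_{n\in\mathds{Z}}c_n\delta_{t_n}$ with $c_n=w_n(t_0)\lambda^{-n}$ for $n\geq0$ and $c_{-n}=\lambda^n w_n(\varphi^{-n}(t_0))^{-1}$ for $n\geq1$ has geometrically decaying coefficients in both directions (since $|w_n(t_0)|^{1/n}\to|w(\zeta_1)|<|\lambda|$ and $|w_n(\varphi^{-n}(t_0))|^{1/n}\to|w(\zeta_2)|>|\lambda|$) and satisfies $T^\prime\mu=\lambda\mu$; running over countably many pairwise disjoint such orbits produces a singular sequence of eigenvectors of $T^\prime$, so $\lambda\in\sigma_{lsf}(T)$, and by closedness $\{|w(\zeta_1)|\leq|\lambda|\leq|w(\zeta_2)|\}\subseteq\sigma_{lsf}(T)$. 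For $|\lambda|<|w(\zeta_1)|$ in cases (1)--(2), $\lambda I-T$ is injective (by the previous paragraph) and, by~\cite[Theorem~4.2]{Ki} as in the proof of Proposition~\ref{p4}(2), Fredholm with defect equal to the number of zeros of $w$ in the open disk; hence $\lambda\notin\sigma_{lsf}(T)$ and $\sigma_{lsf}(T)$ equals that annulus, while $\lambda I-T$ being Fredholm of nonzero index gives $\lambda\in\sigma_w(T)$, so $\sigma_w(T)=\sigma(T)$ in case (2). In case (3) the zero $s$ lets one extend the $T^\prime$-eigenvector construction toward $0$ --- via Proposition~\ref{p2} with base point $s$ (so that the forward products $w_n(s)$ vanish identically) and the no-isolated-points property of $\mathds{T}$ to force singularity, exactly as in the proofs of Propositions~\ref{p5}(4)--\ref{p6} and in~\cite{KO1} --- which yields $|w(\zeta_1)|\mathds{D}\subseteq\sigma_{sf}(T)$ and hence $\sigma_{lsf}(T)=|w(\zeta_2)|\mathds{D}$.

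Finally $\sigma_f(T)=\sigma_{usf}(T)\cup\sigma_{lsf}(T)$ and $\sigma_{sf}(T)=\sigma_{usf}(T)\cap\sigma_{lsf}(T)$ give the remaining equalities in all three cases. The step I expect to be the main obstacle is this two-sided determination of $\sigma_{lsf}(T)$: on one side, verifying via~\cite[Theorem~4.2]{Ki} that below $\rho_{min}(T)$ the operator $\lambda I-T$ is actually Fredholm in cases (1)--(2); on the other, producing, in case (3), a genuinely singular sequence of approximate eigenvectors of $T^\prime$ for every $\lambda$ with $|\lambda|<|w(\zeta_2)|$, which forces one to use the zero of $w$ on $\mathds{T}$ together with the freedom (from density of $p$-points and openness of $\varphi$) to disjointify supports.
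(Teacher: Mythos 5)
Your proposal follows the same skeleton as the paper's (very terse) proof --- \cite[Theorem 3.29]{Ki} for the residual spectrum in the open annulus, Theorem~\ref{t1} to identify $\sigma_{usf}(T)$ with $\sigma_{a.p.}(T,C(\mathds{T}))$, the $\chi_{\{\varphi^n(s)\}}$-series of Proposition~\ref{p5}(4) for case (3), and the defect count of \cite[Theorem 4.2]{Ki} below $\rho_{min}(T)$ --- but you replace the paper's main device for putting the annulus into $\sigma_{lsf}(T)$, namely Proposition~\ref{p2} applied at a point of $\mathds{T}\setminus\{\zeta_1,\zeta_2\}$, by a direct construction of exact $\ell^1$ eigenvectors $\mu=\sum_{n\in\mathds{Z}}c_n\delta_{t_n}$ of $T^\prime$ along bi-infinite orbits. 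That substitution is sound and in one respect cleaner: the hypotheses~(\ref{eq22}) of Proposition~\ref{p2} demand the inequalities for \emph{every} $n$, which at a generic point of $\mathds{T}\setminus\{\zeta_1,\zeta_2\}$ hold only asymptotically (the paper glosses over this), whereas your series only needs the root-test limits $|w_n(t_0)|^{1/n}\to|w(\zeta_1)|$ and $|w_n(t_{-n})|^{1/n}\to|w(\zeta_2)|$, the closed annulus then following from closedness of $\sigma_{lsf}(T)$. The one step you share with the paper that still deserves scrutiny is the appeal to \cite[Theorem 4.2]{Ki} for ``$\lambda I-T$ is Fredholm with defect $\Sigma$ for $|\lambda|<\rho_{min}(T)$'': the paper invokes that theorem only in the elliptic setting of Proposition~\ref{p4}(2), and one must check that the defect computation transfers to a hyperbolic $\varphi$ (whose fixed points lie on $\mathds{T}$ rather than in $\mathds{U}$); without it neither you nor the paper gets $\sigma_w(T)=\sigma(T)$ and $\sigma_{lsf}(T)\ne\sigma(T)$ in case (2).

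In case (3) you conclude $\sigma_{lsf}(T)=|w(\zeta_2)|\mathds{D}$ rather than the annulus $\{|w(\zeta_1)|\leq|\lambda|\leq|w(\zeta_2)|\}$ printed in the statement. Your version is the defensible one: since $\sigma_{sf}(T)=\sigma_{usf}(T)\cap\sigma_{lsf}(T)\subseteq\sigma_{lsf}(T)$, the statement's own claim $\sigma_{sf}(T)=|w(\zeta_1)|\mathds{D}\cup|w(\zeta_2)|\mathds{T}$ already forces $|w(\zeta_1)|\mathds{D}\subseteq\sigma_{lsf}(T)$, and Proposition~\ref{p2} applied at the boundary zero $s$ (where $w_n(s)\equiv 0$, so the first inequality in~(\ref{eq22}) is automatic for every $\lambda$) puts all of $|w(\zeta_2)|\mathds{D}$ into $\sigma_{lsf}(T)$ --- this is precisely ``the same reasoning as in the proof of Proposition~\ref{p5}'' that the paper's proof invokes, and there the conclusion is the full disk. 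So the annulus in the $\sigma_{lsf}$ line of case (3) appears to be a misprint, and your computation is the one consistent with the remaining four equalities of case (3) and with the paper's own argument.
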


\begin{proof} To prove (1) and (2) it is sufficient to notice  that
\begin{enumerate} [(a)]
  \item By~\cite[Theorem 3.29]{Ki}  $\{\lambda \in \mathds{C}: |w(\zeta_1)| < |\lambda < |w(\zeta_2)|\} = \sigma_r(T)$
  \item If $s \in \mathds{T} \setminus \{\zeta_1, \zeta_2\}$, then at the point $s$ the inequalities~(\ref{eq22}) are satisfied and therefore we can apply Proposition~\ref{p2}.
\end{enumerate}
 To prove (3) we apply the same reasoning as in the proof of Proposition~\ref{p5}. 
\end{proof}

\begin{proposition} \label{p8}
  Let $\mathds{A}$ be the disc-algebra and $\varphi$ be a hyperbolic Möbius transformation. Let $w \in \mathds{A}$, $w \not \equiv 0$, and $T = wT_\varphi$. Assume that $|w(\zeta_1)| > |w(\zeta_2)|$. Then:
 $\rho(T) = |w(\zeta_2)|$ and $\rho_{min}(T) =|w(\zeta_1)|$. 
\begin{enumerate}
  \item If $w$ is an invertible element of $\mathds{A}$, then
\begin{equation*}
\begin{split}
 & \sigma_{usf}(T) = \sigma_{a.p.}(T) = \sigma(T) = \{\lambda \in \mathds{C}: |w(\zeta_2) \leq |\lambda| <|w(\zeta_1)|\}  \\
& \text{and} \; w(\zeta_1)\mathds{T} \cup w(\zeta_2)\mathds{T} \subseteq \sigma_{lsf}(T).
\end{split}
\end{equation*}
  \item If $w \not \in \mathds{A}^{-1}$ but $|w| > 0$ on $\mathds{T} \setminus \{\zeta_2\}$, then
\begin{equation*}
\begin{split}
& \sigma(T) = \sigma_w(T) = w(\zeta_1)\mathds{D} \\
 & \sigma_{usf}(T) = \sigma_{a.p.}(T) =  \{\lambda \in \mathds{C}: |w(\zeta_2) \leq |\lambda| <|w(\zeta_1)|\}  \\
& \text{and} \; w(\zeta_1)\mathds{T} \cup w(\zeta_2)\mathds{T} \subseteq \sigma_{lsf}(T).
\end{split}
\end{equation*}
  \item If there is an $s \in \mathds{T}$ such that $s \neq \zeta_1$ and $w(s) = 0$, then 
\begin{equation*}
\begin{split}
& \sigma(T) = \sigma_f(T) = w(\zeta_1)\mathds{D} \\
 & \sigma_{usf}(T) = \sigma_{a.p.}(T) =  \{\lambda \in \mathds{C}: |w(\zeta_2) \leq |\lambda| <|w(\zeta_1)|\}  \\
& \text{and} \; w(\zeta_1)\mathds{T} \cup w(\zeta_2)\mathds{T} \subseteq \sigma_{lsf}(T).
\end{split}
\end{equation*}
\end{enumerate}
\end{proposition}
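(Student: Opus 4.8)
The plan is to run the scheme already used for Propositions~\ref{p5}--\ref{p7}, using that a hyperbolic M\"obius transformation is an automorphism of $\mathds{D}$: hence $T_\varphi$ is an automorphism of $\mathds{A}$, $\varphi$ is a homeomorphism (in particular an open map) of $\partial\mathds{A}=\mathds{T}$ onto itself, and the set of $\varphi$-periodic points is $\{\zeta_1,\zeta_2\}$, hence of first category in $\mathds{T}$. Thus Theorem~\ref{t1}, Corollaries~\ref{c1} and~\ref{c3}, and Proposition~\ref{p2} all apply; in particular $\sigma_{usf}(T)=\sigma_{a.p.}(T)$ for free, and $\sigma(T)$ is connected and rotation invariant. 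By the Kitover formulas (\cite[Theorems~3.23 and~3.29]{Ki}) we record that here $\rho(T)=\max(|w(\zeta_1)|,|w(\zeta_2)|)=|w(\zeta_1)|$ and $\rho_{min}(T)=\min(|w(\zeta_1)|,|w(\zeta_2)|)=|w(\zeta_2)|$. The essential new feature, compared with Proposition~\ref{p7}, is that here the \emph{attracting} fixed point $\zeta_1$ carries the larger modulus of $w$, and this is exactly what forces the residual spectrum inside the annulus to be empty.

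Consider first part (1), $w\in\mathds{A}^{-1}$. By Corollary~\ref{c3} the set $\{|\lambda|:\lambda\in\sigma(T)\}$ is the interval $[\,|w(\zeta_2)|,|w(\zeta_1)|\,]$, so $\sigma(T)$ is the closed annulus $\{\lambda:|w(\zeta_2)|\le|\lambda|\le|w(\zeta_1)|\}$. It remains to prove $\sigma_{a.p.}(T)=\sigma(T)$, and by Theorem~\ref{t1} it suffices to show that each $\lambda$ with $|w(\zeta_2)|<|\lambda|<|w(\zeta_1)|$ lies in $\sigma_{a.p.}(T,C(\mathds{T}))$; as in the derivation of (\ref{eq35}) in the proof of Theorem~\ref{t3}, it is enough to exhibit $t\in\mathds{T}$ with $|w_m(t)|\ge|\lambda|^m$ and $|w_m(\varphi^{-m}(t))|\le|\lambda|^m$ for all $m\in\mathds{N}$. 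Fix any $t_0\in\mathds{T}\setminus\{\zeta_1,\zeta_2\}$, put $t_n=\varphi^n(t_0)$, so that $t_n\to\zeta_1$ as $n\to+\infty$ and $t_n\to\zeta_2$ as $n\to-\infty$, and set
\[
  F(n)=\sum_{j=0}^{n-1}\bigl(\ln|w(t_j)|-\ln|\lambda|\bigr),\qquad n\ge 0,
\]
with the obvious extension to $n<0$, so that $F(n)-F(n-1)=\ln|w(t_{n-1})|-\ln|\lambda|$ for all $n\in\mathds{Z}$. Since this increment tends to $\ln|w(\zeta_1)|-\ln|\lambda|>0$ as $n\to+\infty$ and to $\ln|w(\zeta_2)|-\ln|\lambda|<0$ as $n\to-\infty$, we get $F(n)\to+\infty$ as $n\to\pm\infty$, so $F$ attains a global minimum at some $n_0\in\mathds{Z}$. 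Then $t=t_{n_0}$ satisfies $\ln|w_m(t)|-m\ln|\lambda|=F(n_0+m)-F(n_0)\ge0$ and $\ln|w_m(\varphi^{-m}(t))|-m\ln|\lambda|=F(n_0)-F(n_0-m)\le0$ for every $m$, which is what was needed. Hence $\sigma_{a.p.}(T)$ contains the open annulus, and being closed and contained in $\sigma(T)$ it equals $\sigma(T)$; thus $\sigma_{usf}(T)=\sigma_{a.p.}(T)=\sigma(T)$. Finally, the inclusion $w(\zeta_1)\mathds{T}\cup w(\zeta_2)\mathds{T}\subseteq\sigma_{lsf}(T)$ follows from Proposition~\ref{p2} applied to the constant orbits $t_n\equiv\zeta_1$ and $t_n\equiv\zeta_2$, for which (\ref{eq22}) holds with equality whenever $|\lambda|=|w(\zeta_j)|$.

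For parts (2) and (3) I would repeat the argument, incorporating the zeros of $w$ as in the proofs of Propositions~\ref{p5} and~\ref{p7}. The drift function $F$ still attains its minimum along a generic orbit (one avoiding the zeros of $w$ on $\mathds{T}$) for every $\lambda$ with $|w(\zeta_2)|<|\lambda|<|w(\zeta_1)|$, because at the end $\zeta_2$ the increments $\ln|w(t_j)|-\ln|\lambda|$ tend to $\ln|w(\zeta_2)|-\ln|\lambda|<0$ (possibly $-\infty$); localizing bumps near $\zeta_2$ also puts the relevant part of $\{|\lambda|\le|w(\zeta_2)|\}$ into $\sigma_{a.p.}(T)$ when $w(\zeta_2)=0$. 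Combining this with \cite[Theorem~4.2]{Ki} for the deficiency of $\lambda I-T$ caused by zeros of $w$ in $\mathds{D}$ gives $\sigma(T)=\sigma_w(T)=w(\zeta_1)\mathds{D}$ in case (2); in case (3) the zero $s\ne\zeta_1$ of $w$ on $\mathds{T}$, via a point whose forward (respectively backward) orbit meets $s$, makes (\ref{eq22}) hold for every $\lambda$ with $|\lambda|\le|w(\zeta_2)|$, so Proposition~\ref{p2} and the argument of Proposition~\ref{p5}(4) put the inner disc into $\sigma_{lsf}(T)$, whence $\sigma_f(T)=\sigma_{usf}(T)\cup\sigma_{lsf}(T)=w(\zeta_1)\mathds{D}=\sigma(T)$. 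In all three cases the inclusion $w(\zeta_1)\mathds{T}\cup w(\zeta_2)\mathds{T}\subseteq\sigma_{lsf}(T)$ comes from the constant orbits at $\zeta_1,\zeta_2$ (and, for $\lambda=0$ when $w(\zeta_2)=0$, from the $\lambda=0$ case of Proposition~\ref{p2}).

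The principal obstacle is the equality $\sigma_{a.p.}(T)=\sigma(T)$ in part (1): unlike in Proposition~\ref{p7}, the interior of the spectral annulus must be shown to consist of approximate eigenvalues rather than of residual spectrum, and this is precisely what the minimization of the drift function $F$ (equivalently, the relevant theorem of \cite{Ki}) supplies. A secondary point, reflected in the statement by the use of the mere inclusion $w(\zeta_1)\mathds{T}\cup w(\zeta_2)\mathds{T}\subseteq\sigma_{lsf}(T)$ rather than an equality, is that the exact location of the lower semi-Fredholm spectrum in part (1) is left open (Problem~\ref{pr1}): the constant-orbit construction yields only the two bounding circles, and deciding whether $\sigma_{lsf}(T)$ meets the open annulus seems to require a finer analysis of which one-sided orbits can simultaneously satisfy (\ref{eq22}).
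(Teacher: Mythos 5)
Your treatment of part (1) is correct and is in substance the same route as the paper's one-line proof: the citation of \cite[Theorem 3.29]{Ki} there is precisely the statement that, for a hyperbolic $\varphi$ whose attracting fixed point carries the larger value of $|w|$, every $\lambda$ in the closed annulus admits a point $t\in\mathds{T}$ satisfying the two-sided inequalities~(\ref{eq35}); your minimization of the drift function $F$ along a generic orbit is a clean, self-contained derivation of exactly that fact, and your uses of Theorem~\ref{t1}, Corollary~\ref{c3}, and Proposition~\ref{p2} (with the constant orbits at $\zeta_1,\zeta_2$ for the two circles in $\sigma_{lsf}(T)$) are what the paper intends. You also correctly read $\rho(T)=|w(\zeta_1)|$ and $\rho_{min}(T)=|w(\zeta_2)|$, which the statement has transposed.

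The genuine gap is in part (3). You must prove the \emph{equality} $\sigma_{usf}(T)=\sigma_{a.p.}(T)=\{\lambda:|w(\zeta_2)|\le|\lambda|\le|w(\zeta_1)|\}$, but you only establish the inclusion of the annulus into $\sigma_{a.p.}(T)$. The reverse inclusion is not just unaddressed: it collides with your own construction. Suppose $w(\zeta_2)\ne 0$ and $w(s)=0$ for some $s\in\mathds{T}\setminus\{\zeta_1,\zeta_2\}$, and let $0<|\lambda|<|w(\zeta_2)|$. Along the forward half-orbit $\{\varphi^k(s):k\ge 1\}$, which converges to $\zeta_1$ where $|w|>|\lambda|$, the forward drift tends to $+\infty$, so it has a minimizing base point $t=\varphi^{k_0}(s)$; at $t$ the forward products satisfy $|w_m(t)|\ge|\lambda|^m$, while the backward products $|w_m(\varphi^{-m}(t))|$ are $\le|\lambda|^m$ for $m<k_0$ by the choice of $k_0$ and vanish for $m\ge k_0$ because they contain the factor $w(s)=0$. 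This is exactly the mechanism of Proposition~\ref{p5}(4), and it places the entire inner disc (together with $\lambda=0$, since $\min_{\mathds{T}}|w|=0$) into $\sigma_{a.p.}(T)$, so that $\sigma_{a.p.}(T)=|w(\zeta_1)|\mathds{D}$ rather than the annulus. Thus the asserted equality in part (3) can only hold when $w(\zeta_2)=0$ (in which case the annulus already is the full disc and the statement is vacuously consistent); your write-up asserts it in general without confronting this. A secondary, fixable point: your insertion of the inner disc into $\sigma_{lsf}(T)$ via Proposition~\ref{p2} needs a boundary zero of $w$ lying on the forward orbit of the base point whose strict backward orbit is zero-free; when $w(\zeta_2)\ne 0$ the zeros of $w$ on any single orbit are finitely many (the orbit accumulates only at $\zeta_1,\zeta_2$), so such a choice exists, but this should be said explicitly.
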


\begin{proof}
  The proof follows directly from~\cite[Theorem 3.29]{Ki}  and Theorems~\ref{t1} and~\ref{t2}. 
\end{proof}

\begin{problem} \label{pr1}
  Describe the set $\sigma_{lsf}(T)$ assuming the conditions of Proposition~\ref{p8}.
\end{problem}

\section{Examples}

\begin{example} \label{e3}
  In this example we consider the unit polydisc
\begin{equation*}
  \mathds{D}^n = \{(z_1, \ldots , z_n) \in \mathds{C}^n : |z_i| \leq 1, i=1, \dots , n \}
\end{equation*}
The algebra $\mathds{A}^n$ is the polydisc algebra - the algebra of all functions analytic in
\begin{equation*}
  \mathds{U}^n = \{(z_1, \ldots , z_n) \in \mathds{C}^n : |z_i| < 1, i=1, \dots , n \}.
\end{equation*}
and continuous in $\mathds{D}^n$. It is well known (see e.g.~\cite{Ru}) that the space of maximal ideals of the algebra $\mathds{A}^n$ coincides with $\mathds{D}^n$ and its Shilov boundary coincides with the torus $\mathds{T}^n$.

Let $\varphi(z_1, \ldots , z_n) = (\alpha_1 z_1, \ldots, \alpha_n z_n)$, where $\alpha_i = exp(i\gamma_i)$ and the real numbers $\gamma_i, i = 1, \ldots, n$, are linearly independent over the field of rational real numbers.
Let $w \in \mathds{A}^n$ and $T = wT_\varphi$. It follows from our results that
\begin{enumerate}
  \item If $w$ is invertible in $\mathds{A}^n$ then $\sigma(T) = \sigma_f(T) = |w(0, \ldots, 0)|\mathds{T}$.
  \item If $w$ is not invertible in $\mathds{A}^n$, but invertible in $C(\mathds{T}^n)$ then $\sigma(T) = \sigma_{usf}(T) = \rho(T)\mathds{D}$, where
\begin{equation}\label{eq37}
  \rho(T) = exp \int \limits_{\mathds{T}^n} ln|w| dm_n,
\end{equation}
and $m_n$ is the standard normalized Lebesgue measure on $\mathds{T}^n$.
  \item If $w$ is not invertible in $C(\mathds{T}^n)$, then $\sigma(T)= \sigma_{sf}(T) = \rho(T)\mathds{D}$, where $\rho(T)$ is given by~(\ref{eq37}).
\end{enumerate}
\end{example}

\begin{remark} \label{r5}
  The authors plan to provide a detailed description of the spectra of weighted automorphisms of $\mathds{A}^n$ in a separate publication.
\end{remark}

\begin{example} \label{e2}. 
  \textbf{Weighted endomorphisms of the disk-algebra}

Any isometric endomorphism of the disk-algebra $A$ is of the form
\begin{equation*}
 (T_\varphi f)(z) = f(B(z)), f \in A, z \in \mathds{D},
\end{equation*}
where $B$ is a finite Blaschke product. We assume that $B$ has at least two factors, i.e., it is not a Möbius transformation.

Let $w \in A$ and $T = wT_\varphi$. Theorems~\ref{t1} and~\ref{t2} guarantee that $\sigma(T)$ is a disk (or the singleton $\{0\}$) and that the sets $\sigma_{a.p.} (T) = \sigma_{usf}(T)$ are rotation invariant. Moreover,
it follows from Corollary~\ref{c4} that $\{\lambda \in \mathds{C}: \rho_{min}(T) \leq |\lambda| \leq \rho(T)\} \subseteq \sigma_{lsf}(T)$.

The complicated dynamics of finite Blaschke products on the unit circle currently prevents us from obtaining more information about the spectrum and the essential spectra of $T$.

Even the computation of the spectral radius $\rho(T)$ becomes nontrivial (unless $B$ is a Möbius transformation). The well-known formula (see e.g.~\cite{Ki})
\begin{equation*}
   \rho(T) = \max \limits_{\mu \in M_\varphi} \exp \int \ln{|w|} d\mu,
\end{equation*}
where $M_B$ is the set of all $B$-invariant probability measures in $C(\mathds{T})^\prime$, becomes practically useless, because the set $M_B$ is enormous and does not allow a simple description (see~\cite{Fu}).

Let us consider for example the following two seemingly simple operators.
\begin{equation*}
\begin{split}
 & (T_1f)(z) =\Big{(} \frac{1+z}{2} \Big{)}f(z^2),  \\
 & (T_2f)(z) =\Big{(} \frac{1-z}{2} \Big{)}f(z^2), \\
 & f \in A, z \in \mathds{D}.
 \end{split}
\end{equation*}
It is trivial that $\rho(T_1) = 1$, and some elementary calculations show that $\rho(T_2) =\frac{1}{2}$.
Thus, $\sigma(T_1) = \mathds{D}$ and $\sigma(T_2) = \frac{1}{2}\mathds{D}$. But, beyond the fact that the sets $\sigma_{a.p.}(T_i), i = 1,2$, are rotation invariant and that the trivial inclusions  $\{0\} \cup \mathds{T} \subset \sigma_{a.p.}(T_1)$  and $\{0\} \cup \frac{1}{2}\mathds{T} \subset \sigma_{a.p.}(T_2)$ hold,  we cannot say anything else about the sets $\sigma_{a.p.}(T_i), i = 1,2$.
\end{example}

\end{document}